\theoremstyle{plain}
\newtheorem{theorem}{Theorem}[section]
\newtheorem{proposition}[theorem]{Proposition}
\newtheorem{lemma}[theorem]{Lemma}
\theoremstyle{definition}
\newtheorem{definition}[theorem]{Definition}
\newtheorem{example}[theorem]{Example}
\newtheorem{remark}[theorem]{Remark}
\newtheorem{claim}{Claim}
\numberwithin{claim}{theorem}
\newenvironment{pfclaim}[1][Proof]{\begin{trivlist}
\item[\hskip \labelsep {\textit{{#1}.}}]} {{\footnotesize $\blacksquare$}\end{trivlist}}
\numberwithin{figure}{section}
\numberwithin{table}{section}
\newcommand{\IN}{\ensuremath{\mathbb{N}}}
\newcommand{\minorant}{\ensuremath{\leqq}}
\newcommand{\nset}[1]{\ensuremath{[{#1}]}}
\newcommand{\card}[1]{\ensuremath{\lvert{#1}\rvert}}
\newcommand{\gen}[2][\gendefault]{\ensuremath{\langle{#2}\rangle_{#1}}}
\newcommand{\clonegen}[1]{\gen[]{#1}}
\newcommand{\vect}[1]{\ensuremath{\mathbf{#1}}}
\newcommand{\clIntVal}[3]{\ensuremath{#1_{\ifthenelse{\equal{#2}{}}{\mathord{*}}{#2}\ifthenelse{\equal{#3}{}}{\mathord{*}}{#3}}}}
\newcommand{\clIntNeq}[1]{\ensuremath{#1_{\neq}}}
\newcommand{\clIntLeq}[1]{\ensuremath{#1_{\leq}}}
\newcommand{\clIntGeq}[1]{\ensuremath{#1_{\geq}}}
\newcommand{\clIntNeqOO}[1]{\ensuremath{#1_{{\neq},00}}}
\newcommand{\clIntNeqII}[1]{\ensuremath{#1_{{\neq},11}}}
\newcommand{\clAll}{\ensuremath{\mathsf{\Omega}}}
\newcommand{\clEioo}{\ensuremath{\clIntNeqII{\clAll}}}
\newcommand{\clEioi}{\ensuremath{\clIntGeq{\clAll}}}
\newcommand{\clEiio}{\ensuremath{\clIntLeq{\clAll}}}
\newcommand{\clEiii}{\ensuremath{\clIntNeqOO{\clAll}}}
\newcommand{\clNeq}{\ensuremath{\clIntNeq{\clAll}}}
\newcommand{\clOX}{\ensuremath{{\clIntVal{\clAll}{0}{}}}}
\newcommand{\clIX}{\ensuremath{{\clIntVal{\clAll}{1}{}}}}
\newcommand{\clXO}{\ensuremath{{\clIntVal{\clAll}{}{0}}}}
\newcommand{\clXI}{\ensuremath{{\clIntVal{\clAll}{}{1}}}}
\newcommand{\clOXC}{\ensuremath{\clOX \cup \clVak}}
\newcommand{\clOXCI}{\ensuremath{\clOX \cup \clVaki}}
\newcommand{\clIXC}{\ensuremath{\clIX \cup \clVak}}
\newcommand{\clXOC}{\ensuremath{\clXO \cup \clVak}}
\newcommand{\clXOCI}{\ensuremath{\clXO \cup \clVaki}}
\newcommand{\clXIC}{\ensuremath{\clXI \cup \clVak}}
\newcommand{\clOO}{\ensuremath{\clIntVal{\clAll}{0}{0}}}
\newcommand{\clII}{\ensuremath{\clIntVal{\clAll}{1}{1}}}
\newcommand{\clOI}{\ensuremath{\clIntVal{\clAll}{0}{1}}}
\newcommand{\clS}{\ensuremath{\mathsf{S}}}
\newcommand{\clSc}{\ensuremath{\clIntVal{\clS}{0}{1}}}
\newcommand{\clSM}{\ensuremath{\mathsf{SM}}}
\newcommand{\clSmin}{\ensuremath{\clS^{-}}}
\newcommand{\clSmaj}{\ensuremath{\clS^{+}}}
\newcommand{\clM}{\ensuremath{\mathsf{M}}}
\newcommand{\clMo}{\ensuremath{\clIntVal{\clM}{0}{}}}
\newcommand{\clMi}{\ensuremath{\clIntVal{\clM}{}{1}}}
\newcommand{\clMc}{\ensuremath{\clIntVal{\clM}{0}{1}}}
\newcommand{\clUk}[1]{\ensuremath{\mathsf{U}^{#1}}}
\newcommand{\clTcUk}[1]{\ensuremath{\clIntVal{\clUk{#1}}{0}{1}}}
\newcommand{\clMUk}[1]{\ensuremath{\mathsf{M}\clUk{#1}}}
\newcommand{\clMcUk}[1]{\ensuremath{\clIntVal{\clMUk{#1}}{0}{1}}}
\newcommand{\clWk}[1]{\ensuremath{\mathsf{W}^{#1}}}
\newcommand{\clTcWk}[1]{\ensuremath{\clIntVal{\clWk{#1}}{0}{1}}}
\newcommand{\clMWk}[1]{\ensuremath{\mathsf{M}\clWk{#1}}}
\newcommand{\clMcWk}[1]{\ensuremath{\clIntVal{\clMWk{#1}}{0}{1}}}
\newcommand{\clRefl}{\ensuremath{\mathsf{R}}}  
\newcommand{\clReflOO}{\ensuremath{\clIntVal{\clRefl}{0}{0}}}
\newcommand{\clVak}{\ensuremath{\mathsf{C}}}
\newcommand{\clVaka}[1]{\ensuremath{\clVak_{#1}}}
\newcommand{\clVako}{\ensuremath{\clVaka{0}}}
\newcommand{\clVaki}{\ensuremath{\clVaka{1}}}
\newcommand{\clKlik}[2]{\ensuremath{\mathsf{U}^{#1}_{#2}}}
\newcommand{\clL}{\ensuremath{\mathsf{L}}}
\newcommand{\clLo}{\ensuremath{\clIntVal{\clL}{0}{}}}
\newcommand{\clLi}{\ensuremath{\clIntVal{\clL}{}{1}}}
\newcommand{\clLc}{\ensuremath{\clIntVal{\clL}{0}{1}}}
\newcommand{\clLS}{\ensuremath{\mathsf{LS}}}
\newcommand{\clLambda}{\ensuremath{\mathsf{\Lambda}}}
\newcommand{\clV}{\ensuremath{\mathsf{V}}}
\newcommand{\clLambdac}{\ensuremath{\clIntVal{\clLambda}{0}{1}}}
\newcommand{\clVc}{\ensuremath{\clIntVal{\clV}{0}{1}}}
\newcommand{\clLambdao}{\ensuremath{\clIntVal{\clLambda}{0}{}}}
\newcommand{\clVo}{\ensuremath{\clIntVal{\clV}{0}{}}}
\newcommand{\clLambdai}{\ensuremath{\clIntVal{\clLambda}{}{1}}}
\newcommand{\clVi}{\ensuremath{\clIntVal{\clV}{}{1}}}
\newcommand{\clOmegaOne}{\ensuremath{\clAll(1)}}
\newcommand{\clIstar}{\ensuremath{\mathsf{I}^{*}}}
\newcommand{\clI}{\ensuremath{\mathsf{I}}}
\newcommand{\clIo}{\ensuremath{\mathsf{I}_0}}
\newcommand{\clIi}{\ensuremath{\mathsf{I}_1}}
\newcommand{\clIc}{\ensuremath{\mathsf{J}}}
\newcommand{\clAff}{\ensuremath{\mathsf{A}}}
\newcommand{\clICD}[1]{\ensuremath{\mathsf{D}^{#1}}}
\newcommand{\vak}[1]{\ensuremath{\mathrm{c}_{#1}}}
\newcommand{\id}{\ensuremath{\mathrm{id}}}
\newcommand{\Pii}[1]{\ensuremath{\mathcal{P}_{1,2}(#1)}}
\newcommand{\Pone}[1]{\ensuremath{\mathcal{P}_{1}(#1)}}
\DeclareMathOperator{\pr}{pr}
\newcommand{\closys}[1]{\ensuremath{\mathcal{L}_{#1}}}
\newcommand{\clProj}[1]{\ensuremath{\mathsf{J}_{#1}}}
\DeclareMathOperator{\codim}{codim}
\DeclareMathOperator{\icodim}{\text{$\cap$}-codim}
\newcommand{\displaybump}{\hbox to \@totalleftmargin{\hfil}}
\begin{document}
\title[Clonoids of Boolean functions]{Clonoids of Boolean functions with a linear source clone and a semilattice or 0- or 1-separating target clone}

\author{Erkko Lehtonen}

\address{%
    Department of Mathematics \\
    Khalifa University of Science and Technology \\
    P.O. Box 127788 \\
    Abu Dhabi \\
    United Arab Emirates
}

\date{\today}

\begin{abstract}
Extending Sparks's theorem, we determine the cardinality of the lattice of $(C_1,C_2)$\hyp{}clonoids of Boolean functions for certain pairs $(C_1,C_2)$ of clones of Boolean functions.
Namely, when $C_1$ is a subclone (a proper subclone, resp.)\ of the clone of all linear (affine) functions and $C_2$ is a subclone of the clone generated by a semilattice operation and constants (a subclone of the clone of all $0$- or $1$-separating functions, resp.), then the lattice of $(C_1,C_2)$\hyp{}clonoids is uncountable.
Combining this fact with several earlier results, we obtain a complete classification of the cardinalities of the lattices of $(C_1,C_2)$\hyp{}clonoids for all pairs $(C_1,C_2)$ of clones on $\{0,1\}$.
\end{abstract}

\maketitle


\section{Introduction}

This paper belongs to the field of universal algebra, focusing on the theory of clones, clonoids, and related topics.
Its key notion is that of function class composition, which is defined as follows.
If $F$ and $G$ are sets of multivariate functions, then the composition of $F$ with $G$, denoted $FG$,
is the set of all well\hyp{}defined composite functions of the form $f(g_1, \dots, g_n)$, where $f \in F$ and $g_1, \dots, g_n \in G$.

The $i$\hyp{}th $n$\hyp{}ary \emph{projection} on $A$ is the operation $(a_1, \dots, a_n) \mapsto a_i$.
The set of all projections on $A$ is denoted by $\clProj{A}$.
A \emph{clone} on $A$ is a set $C$ of operations on $A$ that contains all projections on $A$ and is closed under composition, that is, $\clProj{A} \subseteq C$ and $C C \subseteq C$.
For fixed clones $C_1$ and $C_2$ on sets $A$ and $B$, respectively, a \emph{$(C_1,C_2)$\hyp{}clonoid} is a set $F$ of functions of several arguments from $A$ to $B$ satisfying $F C_1 \subseteq F$ and $C_2 F \subseteq F$.
As a special case of this, we have the $(\clProj{A},\clProj{B})$\hyp{}clonoids, which are usually called \emph{minions} or \emph{minor\hyp{}closed classes}.

In universal algebra,
clones arise in a natural way as sets of term operations of algebras or as sets of polymorphisms of relations.
Minions, in turn, arise as sets of term operations induced by terms of height $1$ or as sets of polymorphisms of relation pairs (see Pippenger~\cite{Pippenger}).
We obtain $(C_1,C_2)$\hyp{}clonoids as sets of polymorphisms of relation pairs $(R,S)$, where $R$ and $S$ are invariants of $C_1$ and $C_2$, respectively (see Couceiro and Foldes \cite{CouFol-2005,CouFol-2009}).

In theoretical computer science, universal\hyp{}algebraic tools, including clones and polymorphisms, have proved useful in the analysis of computational complexity of constraint satisfaction problems (CSP).
In particular, minions arise in the context of a new variant known as promise CSP\@.
For further information, see the survey article by Barto et al.\ \cite{BarBulKroOpr}.

To the best of the author's knowledge, the term ``clone'' was first used in the universal\hyp{}algebraic sense in the 1965 monograph of Cohn~\cite{Cohn}, who attributed it to Philip Hall.
The term ``clonoid'' was introduced in the 2016 paper by Aichinger and Mayr \cite{AicMay}, and ``minion'' was coined by Opr\v{s}al around the year 2018 (see \cite[Definition~2.20]{BarBulKroOpr}, \cite{BulKroOpr}).
It should, however, be noted that these concepts have appeared in the literature much earlier under different names.
For further information and general background on universal algebra and clones, see, e.g., the monographs by Bergman~\cite{Bergman} and Szendrei~\cite{Szendrei}.

Classification of clones on finite sets with at least three elements is one of the main open problems in universal algebra, and numerous studies have been published on this topic.
In complete analogy with the classification problem of clones, we are led to the classification problem of $(C_1,C_2)$\hyp{}clonoids.
Such classification results for certain types of clone pairs $(C_1,C_2)$ have appeared in the literature,
for example, in the works of
Fioravanti~\cite{Fioravanti-AU,Fioravanti-IJAC},
Kreinecker~\cite{Kreinecker},
and
Mayr and Wynne~\cite{MayWyn}.

We would like to highlight the following remarkable result due to Sparks, which can be viewed as a starting point for our work.
Here, $\closys{(C_1,C_2)}$ denotes the lattice of $(C_1,C_2)$\hyp{}clonoids.
An operation $f \colon A^n \to A$ is a \emph{near\hyp{}unanimity operation} if it satisfies all identities of the form $f(x, \dots, x, y, x, \dots, x) \approx x$, where the single $y$ is at any argument position.
A ternary near\hyp{}unanimity operation is called a \emph{majority operation}.
A \emph{Mal'cev operation} is a ternary operation that satisfies the identities $f(x,x,y) \approx f(y,x,x) \approx y$.

\begin{theorem}[{Sparks~\cite[Theorem~1.3]{Sparks-2019}}]
\label{thm:Sparks}
Let $A$ be a finite set with $\card{A} > 1$, and let $B = \{0,1\}$.
Let $C$ be a clone on $B$.
Then the following statements hold.
\begin{enumerate}[label={\upshape(\roman*)}]
\item\label{thm:Sparks:finite} $\closys{(\clProj{A},C)}$ is finite if and only if $C$ contains a near\hyp{}unanimity operation.
\item\label{thm:Sparks:countable} $\closys{(\clProj{A},C)}$ is countably infinite if and only if $C$ contains a Mal'cev operation but no majority operation.
\item\label{thm:Sparks:uncountable} $\closys{(\clProj{A},C)}$ has the cardinality of the continuum if and only if $C$ contains neither a near\hyp{}unanimity operation nor a Mal'cev operation.
\end{enumerate}
\end{theorem}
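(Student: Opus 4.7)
The plan is to handle the three cases separately, establishing in each the implication from the clone-theoretic condition on $C$ to the cardinality bound on $\closys{(\clProj{A},C)}$; the converse directions then follow automatically because the three conditions on $C$ are mutually exclusive and exhaustive.

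For \ref{thm:Sparks:finite}, the tool is a Baker--Pixley-style argument. If $C$ contains a $(k+1)$-ary near-unanimity operation $\nu$, then for any $(\clProj{A},C)$-clonoid $F$ and any function $f \colon A^n \to B$, one has $f \in F$ iff for every $k$-element subset $S \subseteq A^n$ there is some $g \in F$ with $g|_S = f|_S$, the local witnesses being assembled by $\nu$. Combined with closure under minors, this shows that $F$ is specified by the finite set of its restrictions to $k$-tuples of inputs (up to permutation and repetition of coordinates); since $\card{A}$ and $\card{B}$ are both finite, there are finitely many such descriptors, hence finitely many clonoids.

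For \ref{thm:Sparks:countable}, the key observation is that any Mal'cev operation on $B = \{0,1\}$ must coincide with $x \oplus y \oplus z$, so each slice $F \cap B^{A^n}$ becomes an affine subspace over $\mathbb{F}_2$. Encoding $F$ as a minor-compatible sequence of such subspaces yields at most countably many clonoids, since each subspace is specified by finitely much $\mathbb{F}_2$-data and the coherent tower of data across arities is countable in total. A matching $\aleph_0$ lower bound follows from explicit examples that differ only at a single arity.

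The main obstacle is \ref{thm:Sparks:uncountable}. By Post's classification, the clones $C$ on $\{0,1\}$ containing neither a near-unanimity nor a Mal'cev operation are precisely those contained in certain semilattice-generated clones or in the clones of $0$- or $1$-separating functions. The plan is to construct, for each such $C$, an infinite antichain $\{f_i\}_{i \in \IN}$ of Boolean functions together with a combinatorial invariant $\mu$ that is monotone under both minor operations and $C$-compositions but satisfies $\mu(f_i) = i$; then no $f_i$ is generated from $\{f_j : j \neq i\}$, and the map $S \mapsto \langle \{f_i : i \in S\}\rangle$ embeds the Boolean lattice $2^{\IN}$ into $\closys{(\clProj{A},C)}$. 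The hard part will be designing an invariant that simultaneously defeats every eligible $C$; I expect a case split over the main families (semilattice, $0$-separating, $1$-separating) with threshold- or weight-type antichain members tailored to each family.
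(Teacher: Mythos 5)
First, a point of reference: the paper does \emph{not} prove this statement. It is Sparks's theorem, quoted verbatim with a citation to \cite[Theorem~1.3]{Sparks-2019} and used as a black box, so there is no internal proof to compare against; I can only assess your proposal on its own terms.

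Your overall strategy is sound: proving the three forward implications suffices, since the three hypotheses on $C$ are indeed mutually exclusive and exhaustive (on $\{0,1\}$ a clone with both a near-unanimity term and a Mal'cev term has a majority term), and the three cardinality conclusions exclude one another. The Baker--Pixley localization sketch for \ref{thm:Sparks:finite} is essentially the right argument. However, there are two genuine gaps. The serious one is in \ref{thm:Sparks:countable}: the claim that ``the coherent tower of data across arities is countable in total'' is false as a general principle. A minor-compatible sequence $(F^{(n)})_{n}$ in which each slice is a finite object ranges over a set of cardinality up to $2^{\aleph_0}$ --- indeed, case \ref{thm:Sparks:uncountable} produces continuum many clonoids that are likewise coherent towers of finite slices. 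So the observation that each $F^{(n)}$ is an affine subspace of $\mathbb{F}_2^{A^n}$ cannot by itself yield countability. The substantive content of this part of Sparks's theorem is a finite-generation (equivalently, chain-condition) argument: every $(\clProj{A},C)$-clonoid with $\clLc \subseteq C$ is generated by its $m$-ary part for some $m$, in the spirit of Aichinger--Mayr. Your proposal contains no trace of that argument, and without it the countable upper bound does not follow.

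The second gap is in \ref{thm:Sparks:uncountable}, which you present only as a plan (``the hard part will be designing an invariant\dots''), so the core construction is deferred rather than supplied. Moreover, the announced case split is incomplete: the maximal Boolean clones containing neither a near-unanimity nor a Mal'cev operation are $\clLambda$, $\clV$, $\clUk{\infty}$, $\clWk{\infty}$, and $\clOmegaOne$, and the essentially at most unary clone $\clOmegaOne$ (which contains negation) is contained in none of the semilattice or $0$-/$1$-separating families you intend to treat, so that case would be missed entirely. For what it is worth, your antichain-plus-invariant idea is the same mechanism the present paper uses for its own Theorem~\ref{thm:Ulin} (the family $\beta_n$ and Lemma~\ref{lem:minS}), but there the source clone is linear rather than trivial, and the execution of such a construction is precisely where the work lies.
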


Considering that the clones on the two\hyp{}element set $\{0,1\}$ are well known (see Subsection~\ref{subsec:Post} and Figure~\ref{fig:Post}), the present author initiated the effort of systematically counting and enumerating all $(C_1,C_2)$\hyp{}clonoids, for each pair $(C_1,C_2)$ of clones on $\{0,1\}$.
This was achieved for various pairs $(C_1,C_2)$ of clones in a series of papers \cite{CouLeh-Lcstability,Lehtonen-SM,Lehtonen-nu,Lehtonen-discmono,Lehtonen-ess-lin-sem-sep} by the present author, partly in collaboration with M. Couceiro.
See \cite[Section 7, Table~7.1]{Lehtonen-ess-lin-sem-sep} for a summary of the results of these previous papers.

However, there still remain a few pairs $(C_1,C_2)$ of clones on $\{0,1\}$ for which the cardinality of the clonoid lattice $\closys{(C_1,C_2)}$ and the $(C_1,C_2)$\hyp{}clonoids are unknown.
This is the case when
the source clone $C_1$ belongs to the interval between the clone $\clLc$ of idempotent linear functions and the clone $\clL$ of all linear functions
and
the target clone $C_2$ belongs to the interval between the clone $\clLambdac$ of conjunctions and the clone $\clUk{\infty}$ of $1$\hyp{}separating functions of rank $\infty$ or, dually, to the interval between the clone $\clVc$ of disjunctions and the clone $\clWk{\infty}$ of $0$\hyp{}separating functions of rank $\infty$.
Nevertheless, in the case when $C_1 = \clL$ and $C_2$ contains $\clMcUk{\infty}$ or $\clMcWk{\infty}$ (monotone, idempotent, $1$- or $0$\hyp{}separating functions of rank $\infty$), we know that $\closys{(C_1,C_2)}$ is finite.

It is the purpose of this paper to fill this gap and to determine the cardinality of the $(C_1,C_2)$\hyp{}clonoid lattice in the remaining cases described in the previous paragraph.
It turns out that in all these cases, the $(C_1,C_2)$\hyp{}clonoid lattice is uncountable (Theorem~\ref{thm:Ulin}).
The proof presented in Section~\ref{sec:uncountable} is based on the idea of constructing a countably infinite family $F$ of Boolean functions with the property that for every subset $S$ of $F$, the $(C_1,C_2)$\hyp{}clonoid generated by $S$ contains no members of $F \setminus S$.
Consequently, distinct subsets of $F$ always generate distinct $(C_1,C_2)$\hyp{}clonoids.
Because the power set of a countably infinite set is uncountable, it follows that the lattice of $(C_1,C_2)$\hyp{}clonoids is uncountable.

While it may be infeasible to explicitly describe all $(C_1,C_2)$\hyp{}clonoids when there are an uncountable infinitude of them,
we present a few examples of such $(C_1,C_2)$\hyp{}clonoids in Section~\ref{sec:some}.
The examples are of two types.
Firstly, the $(C'_1,C'_2)$\hyp{}clonoids are known for certain superclones $C'_1 \supseteq C_1$ and $C'_2 \supseteq C_2$ from our earlier results; these are also $(C_1,C_2)$\hyp{}clonoids by the monotonicity of function class composition.
Secondly,
the concept of $(\clL,\clLambdac)$\hyp{}clonoid can be reformulated in the language of linear algebra, and we may use some basic notions from linear algebra to define new types of $(\clL,\clLambdac)$\hyp{}clonoids.

This paper brings to a conclusion our endeavour to determine the cardinality of the lattice of $(C_1,C_2)$\hyp{}clonoids for arbitrary source and target clones on $\{0,1\}$.
In this way, we have obtained a noteworthy extension of Sparks's theorem (Theorem~\ref{thm:Sparks}), which we state as our new Theorem~\ref{thm:card}.
Its proof, presented in Section~\ref{sec:summary}, essentially puts together all our earlier results on $(C_1,C_2)$\hyp{}clonoids of Boolean functions, and we provide exact references to the literature.

We conclude the paper with some final remarks and open problems in Section~\ref{sec:remarks}.


\section{Preliminaries}

\subsection{General}

The sets of nonnegative integers and positive integers are denoted by $\IN$ and $\IN^{+}$, respectively.
For $n \in \IN$, let $\nset{n} := \{ \, i \in \IN^{+} \mid 1 \leq i \leq n \, \}$. 
We denote tuples by bold letters and their components by the corresponding italic letters, e.g., $\vect{a} = (a_1, \dots, a_n)$.

\subsection{Clones and clonoids}

Let $A$ and $B$ be nonempty sets, and let $n \in \IN^{+}$.
An \emph{$n$\hyp{}ary function} from $A$ to $B$ is a mapping $f \colon A^n \to B$; here, the integer $n$ is called the \emph{arity} of $f$.
We denote by $\mathcal{F}_{AB}^{(n)}$ the set of all $n$\hyp{}ary functions from $A$ to $B$ and we let $\mathcal{F}_{AB} := \bigcup_{n \in \IN^{+}} \mathcal{F}_{AB}^{(n)}$ be the set of all \emph{finitary functions} from $A$ to $B$.
In the special case when $A = B$, we speak of \emph{operations} on $A$, and we write $\mathcal{O}_A^{(n)}$ and $\mathcal{O}_A$ for $\mathcal{F}_{AA}^{(n)}$ and $\mathcal{F}_{AA}$, respectively.
For $C \subseteq \mathcal{F}_{AB}$ and $n \in \IN$, the \emph{$n$\hyp{}ary part} of $C$ is $C^{(n)} := C \cap \mathcal{F}_{AB}^{(n)}$.

For $n, i \in \IN$ with $1 \leq i \leq n$, the $i$\hyp{}th $n$\hyp{}ary \emph{projection} on $A$ is the operation $\pr_i^{(n)} \colon A^n \to A$, $(a_1, \dots, a_n) \mapsto a_i$.
The set of all projections on $A$ is denoted by $\clProj{A}$.

For $f \in \mathcal{F}_{BC}^{(n)}$ and $g_1, \dots, g_n \in \mathcal{F}_{AB}^{(m)}$, the \emph{composition} of $f$ with $(g_1, \dots, g_n)$, denoted by $f(g_1, \dots, g_n)$, is a function in $\mathcal{F}_{AC}^{(m)}$ and is defined by the rule
\[
f(g_1, \dots, g_n)(\vect{a}) :=
f(g_1(\vect{a}), \dots, g_n(\vect{a})),
\]
for all $\vect{a} \in A^m$.

The notion of composition extends in a natural way to function classes.
For $F \subseteq \mathcal{F}_{BC}$ and $G \subseteq \mathcal{F}_{AB}$, the \emph{composition} of $F$ with $G$
is
\[
FG :=
\{ \, h \in \mathcal{F}_{AC} \mid
\exists m, n \in \IN^{+} \,
\exists f \in F^{(n)} \,
\exists g_1, \dots, g_n \in G^{(m)} \colon
h = f(g_1, \dots, g_n) \, \}
.
\]
It follows immediately from the definition that function class composition is monotone with respect to subset inclusion, i.e., if $F, F' \subseteq \mathcal{F}_{BC}$, $G, G' \subseteq \mathcal{F}_{AB}$ satisfy $F \subseteq F'$ and $G \subseteq G'$, then $F G \subseteq F' G'$.

Function class composition allows us to define our main objects of study in a convenient way.
A \emph{clone} on $A$ is a set $C \subseteq \mathcal{O}_A$ that is closed under composition and contains all projections on $A$, in symbols, $C C \subseteq C$ and $\clProj{A} \subseteq C$.
The clones on $A$ form a closure system on $\mathcal{O}_A$, and the clone generated by a set $F \subseteq \mathcal{O}_A$, i.e., the least clone containing $F$, is denoted by $\clonegen{F}$.

Let $A$ and $B$ be arbitrary nonempty sets, and let $C_1$ be a clone on $A$ (the \emph{source clone}) and let $C_2$ be a clone on $B$ (the \emph{target clone}).
A set $K \subseteq \mathcal{F}_{AB}$ is called a \emph{$(C_1,C_2)$\hyp{}clonoid} if $K C_1 \subseteq K$ and $C_2 K \subseteq K$
($K$ is stable under right composition with $C_1$ and under left composition with $C_2$).
The set $\closys{(C_1,C_2)}$ of all $(C_1,C_2)$\hyp{}clonoids forms a closure system on $\mathcal{F}_{AB}$,
and the $(C_1,C_2)$\hyp{}clonoid generated by a set $F \subseteq \mathcal{F}_{AB}$, i.e.,
the least $(C_1,C_2)$\hyp{}clonoid containing $F$, is denoted by $\gen[(C_1,C_2)]{F}$.
A $(\clProj{A},\clProj{B})$\hyp{}clonoid is called a \emph{minion} or a \emph{minor\hyp{}closed class}.

\subsection{Boolean functions}
\label{subsec:Bf}

Operations on $\{0,1\}$ are called \emph{Boolean functions}.
Table~\ref{tab:Bfs} defines a few well\hyp{}known Boolean functions:
$\vak{0}$ and $\vak{1}$ (constant functions),
$\id$ (identity),
$\neg$ (negation),
$\wedge$ (conjunction),
$\vee$ (disjunction),
$+$ (addition modulo $2$).
Recall that $\pr_i^{(n)}$ denotes the $i$\hyp{}th $n$\hyp{}ary projection;
thus $\id = \pr_1^{(1)}$.
For $1 \leq i \leq n$, we also let $\neg_i^{(n)} := \neg(\pr_i^{(n)})$, the $i$\hyp{}th $n$\hyp{}ary \emph{negation} (or \emph{negated projection}).
For $b \in \{0,1\}$ and $n \in \IN^{+}$, the $n$\hyp{}ary \emph{constant function} taking value $b$ is $\vak{b}^{(n)} \colon \{0,1\}^n \to \{0,1\}$, $\vak{b}^{(n)}(\vect{a}) = b$ for all $\vect{a} \in \{0,1\}^n$.
We will omit the superscript indicating the arity when it is clear from the context or irrelevant.

\begin{table}
\begin{tabular}[t]{c|cccc}
$x_1$ & $\vak{0}$ & $\vak{1}$ & $\id$ & $\neg$ \\
\hline
$0$ & $0$ & $1$ & $0$ & $1$ \\
$1$ & $0$ & $1$ & $1$ & $0$
\end{tabular}
\qquad\qquad
\begin{tabular}[t]{cc|ccc}
$x_1$ & $x_2$ & $\wedge$ & $\vee$ & $+$ \\
\hline
$0$ & $0$ & $0$ & $0$ & $0$ \\
$0$ & $1$ & $0$ & $1$ & $1$ \\
$1$ & $0$ & $0$ & $1$ & $1$ \\
$1$ & $1$ & $1$ & $1$ & $0$
\end{tabular}
\caption{Some well\hyp{}known Boolean functions.}
\label{tab:Bfs}
\end{table}

The \emph{complement} of $a \in \{0,1\}$ is $\overline{a} := \neg(a)$.
The \emph{complement} of $\vect{a} = (a_1, \dots, a_n) \in \{0,1\}^n$ is $\overline{\vect{a}} := (\overline{a_1}, \dots, \overline{a_n})$.
We regard the set $\{0,1\}$ totally ordered by the natural order $0 < 1$, which induces the direct product order on $\{0,1\}^n$.
The poset $(\{0,1\}^n, \mathord{\leq})$ is a Boolean lattice, i.e., a complemented distributive lattice with least and greatest elements $\vect{0} = (0, \dots, 0)$ and $\vect{1} = (1, \dots, 1)$ and with the map $\vect{a} \mapsto \overline{\vect{a}}$ being the complementation.

We will denote the set of all Boolean functions by $\clAll$, that is, $\clAll = \mathcal{O}_{\{0,1\}}$.
For $a, b \in \{0,1\}$, let
\begin{align*}
\clIntVal{\clAll}{a}{} &:= \{ \, f \in \clAll \mid f(0, \dots, 0) = a \, \}, \\
\clIntVal{\clAll}{}{b} &:= \{ \, f \in \clAll \mid f(1, \dots, 1) = b \, \}, \\
\clIntVal{\clAll}{a}{b} &:= \clIntVal{\clAll}{a}{} \cap \clIntVal{\clAll}{}{b}.
\end{align*}
Moreover, for any $K \subseteq \clAll$, let
$\clIntVal{K}{a}{} := K \cap \clIntVal{\clAll}{a}{}$,
$\clIntVal{K}{}{b} := K \cap \clIntVal{\clAll}{}{b}$,
$\clIntVal{K}{a}{b} := K \cap \clIntVal{\clAll}{a}{b}$.
We let also
\begin{align*}
\clEiio &:= \{ \, f \in \clAll \mid f(\vect{0}) \leq f(\vect{1}) \, \}, \\
\clEioi &:= \{ \, f \in \clAll \mid f(\vect{0}) \geq f(\vect{1}) \, \}, \\
\clNeq &:= \{ \, f \in \clAll \mid f(\vect{0}) \neq f(\vect{1}) \, \}, \\
\clEioo &:= \clNeq \cup \clII, \\
\clEiii &:= \clNeq \cup \clOO.
\end{align*}

Let $f \in \clAll^{(n)}$.
The elements of $f^{-1}(1)$ and those of $f^{-1}(0)$ are called the \emph{true points} and the \emph{false points} of $f$, respectively.
The \emph{negation} $\overline{f}$, the \emph{inner negation} $f^\mathrm{n}$, and the \emph{dual} $f^\mathrm{d}$ of $f$ are the $n$\hyp{}ary Boolean functions given by the rules
$\overline{f}(\vect{a}) := \overline{f(\vect{a})}$,
$f^\mathrm{n}(\vect{a}) := f(\overline{\vect{a}})$,
and
$f^\mathrm{d}(\vect{a}) := \overline{f(\overline{\vect{a}})}$,
for all $\vect{a} \in \{0,1\}^n$.
We can write these definitions using functional composition as
$\overline{f} := \neg(f)$,
$f^\mathrm{n} := f(\neg_1^{(n)}, \dots, \neg_n^{(n)})$,
and
$f^\mathrm{d} := \neg(f(\neg_1^{(n)}, \dots, \neg_n^{(n)}))$.
For any set $K \subseteq \clAll$,
let
$\overline{K} := \{ \, \overline{f} \mid f \in K \, \}$,
$K^\mathrm{n} := \{ \, f^\mathrm{n} \mid f \in K \, \}$,
$K^\mathrm{d} := \{ \, f^\mathrm{d} \mid f \in K \, \}$.

A function $f \in \clAll$ is \emph{self\hyp{}dual} if $f = f^\mathrm{d}$, and $f$ is \emph{reflexive} if $f = f^\mathrm{n}$.
We denote by $\clS$ the set of all self\hyp{}dual functions and by $\clRefl$ the set of all reflexive functions.

Let $f, g \in \clAll^{(n)}$.
We say that $f$ is a \emph{minorant} of $g$ or that $g$ is a \emph{majorant} of $f$, and we write $f \minorant g$, if $f(\vect{a}) \leq g(\vect{a})$ for all $\vect{a} \in \{0,1\}^n$.

\subsection{Post's lattice}
\label{subsec:Post}

The lattice of clones on $\{0,1\}$ was completely described by Post~\cite{Post}, and it is known as \emph{Post's lattice}.
It is a countably infinite lattice, and it is presented in Figure~\ref{fig:Post}.
The clones on $\{0,1\}$ are precisely the following subsets of $\clAll$:
\begin{itemize}
\item $\clAll$, $\clOX$, $\clXI$, $\clOI$,
\item $\clM := \{ \, f \in \clAll \mid \text{$f(\vect{a}) \leq f(\vect{b})$ whenever $\vect{a} \leq \vect{b}$} \, \}$ (the clone of all \emph{monotone} functions), $\clMo$, $\clMi$, $\clMc$,
\item $\clS$ (the clone of all \emph{self\hyp{}dual} functions), $\clSc$, $\clSM := \clS \cap \clM$,
\item $\clL := \clonegen{\mathord{+}, \vak{1}}$ (the clone of all \emph{linear} functions), $\clLS := \clL \cap \clS$, $\clLo$, $\clLi$, $\clLc$,
\item for $k \in \{ \, n \in \IN \mid n \geq 2 \, \} \cup \{\infty\}$,
\begin{align*}
\clUk{k} &:= \{ \, f \in \clAll \mid \forall T \subseteq f^{-1}(1) \colon \card{T} \leq k \implies \bigwedge T \neq \vect{0} \, \}, \\
\clWk{k} &:= \{ \, f \in \clAll \mid \forall F \subseteq f^{-1}(0) \colon \card{F} \leq k \implies \bigvee F \neq \vect{1} \, \}
\end{align*}
(the clone of all \emph{$1$\hyp{}separating functions of rank $k$} and the clone of all \emph{$0$\hyp{}separating functions of rank $k$}),
$\clTcUk{k}$, $\clTcWk{k}$, $\clMUk{k} := \clUk{k} \cap \clM$, $\clMWk{k} := \clWk{k} \cap \clM$, $\clMcUk{k}$, $\clMcWk{k}$,
\item $\clLambda := \clonegen{\mathord{\wedge}, \vak{0}, \vak{1}}$ (the clone of \emph{conjunctions} and constants), $\clLambdao$, $\clLambdai$, $\clLambdac$,
\item $\clV := \clonegen{\mathord{\vee}, \vak{0}, \vak{1}}$ (the clone of \emph{disjunctions} and constants), $\clVo$, $\clVi$, $\clVc$,
\item $\clOmegaOne$ (the clone of all \emph{essentially at most unary} functions, i.e., all projections, negations, and constant functions),
\item $\clIstar$ (the clone of all projections and negations),
\item $\clI$ (the clone of all projections and constant functions),
\item $\clIo$ (the clone of all projections and constant functions taking value $0$),
\item $\clIi$ (the clone of all projections and constant functions taking value $1$),
\item $\clIc$ (the clone of all projections).
\end{itemize}

\begin{figure}
\begin{center}
\scalebox{0.31}{%
\tikzstyle{every node}=[circle, draw, fill=black, scale=1, font=\LARGE]
\begin{tikzpicture}[baseline, scale=1]
   \node [label = below:$\clIc$] (Ic) at (0,-1) {};
   \node [label = right:$\clIstar$] (Istar) at (0,0.5) {};
   \node [label = right:$\clIo$] (I0) at (4.5,0.5) {};
   \node [label = left:$\clIi$] (I1) at (-4.5,0.5) {};
   \node [label = below:$\clI$] (I) at (0,2) {};
   \node [label = above:$\clOmegaOne$] (Omega1) at (0,5) {};
   \node [label = below:$\clLc$] (Lc) at (0,7.5) {};
   \node [label = right:$\clLS$] (LS) at (0,9) {};
   \node [label = right:$\clLo$] (L0) at (3,9) {};
   \node [label = left:$\clLi$] (L1) at (-3,9) {};
   \node [label = above:$\clL$] (L) at (0,10.5) {};
   \node [label = below:$\clSM\,\,$] (SM) at (0,13.5) {};
   \node [label = left:$\clSc$] (Sc) at (0,15) {};
   \node [label = above:$\clS$] (S) at (0,16.5) {};
   \node [label = below:$\clMc$] (Mc) at (0,23) {};
   \node [label = left:$\clMo\,\,$] (M0) at (2,24) {};
   \node [label = right:$\,\,\clMi$] (M1) at (-2,24) {};
   \node [label = above:$\clM\,\,$] (M) at (0,25) {};
   \node [label = below:$\clLambdac$] (Lamc) at (7.2,6.7) {};
   \node [label = left:$\clLambdai$] (Lam1) at (5,7.5) {};
   \node [label = right:$\clLambdao$] (Lam0) at (8.7,7.5) {};
   \node [label = below:$\clLambda$] (Lam) at (6.5,8.3) {};
   \node [label = left:$\clMcUk{\infty}$] (McUi) at (7.2,11.5) {};
   \node [label = left:$\clMUk{\infty}$] (MUi) at (8.7,13) {};
   \node [label = right:$\clTcUk{\infty}$] (TcUi) at (10.2,12) {};
   \node [label = right:$\clUk{\infty}$] (Ui) at (11.7,13.5) {};
   \node [label = left:$\clMcUk{3}$] (McU3) at (7.2,16) {};
   \node [label = left:$\clMUk{3}$] (MU3) at (8.7,17.5) {};
   \node [label = right:$\clTcUk{3}$] (TcU3) at (10.2,16.5) {};
   \node [label = right:$\clUk{3}$] (U3) at (11.7,18) {};
   \node [label = left:$\clMcUk{2}\,$] (McU2) at (7.2,19) {};
   \node [label = left:$\clMUk{2}\,$] (MU2) at (8.7,20.5) {};
   \node [label = right:$\clTcUk{2}$] (TcU2) at (10.2,19.5) {};
   \node [label = right:$\clUk{2}$] (U2) at (11.7,21) {};
   \node [label = below:$\clVc$] (Vc) at (-7.2,6.7) {};
   \node [label = right:$\clVo$] (V0) at (-5,7.5) {};
   \node [label = left:$\clVi$] (V1) at (-8.7,7.5) {};
   \node [label = below:$\clV$] (V) at (-6.5,8.3) {};
   \node [label = right:$\clMcWk{\infty}$] (McWi) at (-7.2,11.5) {};
   \node [label = right:$\clMWk{\infty}$] (MWi) at (-8.7,13) {};
   \node [label = left:$\clTcWk{\infty}$] (TcWi) at (-10.2,12) {};
   \node [label = left:$\clWk{\infty}$] (Wi) at (-11.7,13.5) {};
   \node [label = right:$\clMcWk{3}$] (McW3) at (-7.2,16) {};
   \node [label = right:$\clMWk{3}$] (MW3) at (-8.7,17.5) {};
   \node [label = left:$\clTcWk{3}$] (TcW3) at (-10.2,16.5) {};
   \node [label = left:$\clWk{3}$] (W3) at (-11.7,18) {};
   \node [label = right:$\,\,\clMcWk{2}$] (McW2) at (-7.2,19) {};
   \node [label = right:$\clMWk{2}$] (MW2) at (-8.7,20.5) {};
   \node [label = left:$\clTcWk{2}$] (TcW2) at (-10.2,19.5) {};
   \node [label = left:$\clWk{2}$] (W2) at (-11.7,21) {};
   \node [label = above:$\clOI$] (Tc) at (0,28) {};
   \node [label = right:$\clOX$] (T0) at (5,29.5) {};
   \node [label = left:$\clXI$] (T1) at (-5,29.5) {};
   \node [label = above:$\clAll$] (Omega) at (0,31) {};
   \draw [thick] (Ic) -- (Istar) to[out=135,in=-135] (Omega1);
   \draw [thick] (I) -- (Omega1);
   \draw [thick] (Omega1) to[out=135,in=-135] (L);
   \draw [thick] (Ic) -- (I0) -- (I);
   \draw [thick] (Ic) -- (I1) -- (I);
   \draw [thick] (Ic) to[out=128,in=-134] (Lc);
   \draw [thick] (Ic) to[out=58,in=-58] (SM);
   \draw [thick] (I0) -- (L0);
   \draw [thick] (I1) -- (L1);
   \draw [thick] (Istar) to[out=60,in=-60] (LS);
   \draw [thick] (Ic) -- (Lamc);
   \draw [thick] (I0) -- (Lam0);
   \draw [thick] (I1) -- (Lam1);
   \draw [thick] (I) -- (Lam);
   \draw [thick] (Ic) -- (Vc);
   \draw [thick] (I0) -- (V0);
   \draw [thick] (I1) -- (V1);
   \draw [thick] (I) -- (V);
   \draw [thick] (Lamc) -- (Lam0) -- (Lam);
   \draw [thick] (Lamc) -- (Lam1) -- (Lam);
   \draw [thick] (Lamc) -- (McUi);
   \draw [thick] (Lam0) -- (MUi);
   \draw [thick] (Lam1) -- (M1);
   \draw [thick] (Lam) -- (M);
   \draw [thick] (Vc) -- (V0) -- (V);
   \draw [thick] (Vc) -- (V1) -- (V);
   \draw [thick] (Vc) -- (McWi);
   \draw [thick] (V0) -- (M0);
   \draw [thick] (V1) -- (MWi);
   \draw [thick] (V) -- (M);
   \draw [thick] (McUi) -- (TcUi) -- (Ui);
   \draw [thick] (McUi) -- (MUi) -- (Ui);
   \draw [thick,loosely dashed] (McUi) -- (McU3);
   \draw [thick,loosely dashed] (MUi) -- (MU3);
   \draw [thick,loosely dashed] (TcUi) -- (TcU3);
   \draw [thick,loosely dashed] (Ui) -- (U3);
   \draw [thick] (McU3) -- (TcU3) -- (U3);
   \draw [thick] (McU3) -- (MU3) -- (U3);
   \draw [thick] (McU3) -- (McU2);
   \draw [thick] (MU3) -- (MU2);
   \draw [thick] (TcU3) -- (TcU2);
   \draw [thick] (U3) -- (U2);
   \draw [thick] (McU2) -- (TcU2) -- (U2);
   \draw [thick] (McU2) -- (MU2) -- (U2);
   \draw [thick] (McU2) -- (Mc);
   \draw [thick] (MU2) -- (M0);
   \draw [thick] (TcU2) to[out=120,in=-25] (Tc);
   \draw [thick] (U2) -- (T0);
   \draw [thick] (McWi) -- (TcWi) -- (Wi);
   \draw [thick] (McWi) -- (MWi) -- (Wi);
   \draw [thick,loosely dashed] (McWi) -- (McW3);
   \draw [thick,loosely dashed] (MWi) -- (MW3);
   \draw [thick,loosely dashed] (TcWi) -- (TcW3);
   \draw [thick,loosely dashed] (Wi) -- (W3);
   \draw [thick] (McW3) -- (TcW3) -- (W3);
   \draw [thick] (McW3) -- (MW3) -- (W3);
   \draw [thick] (McW3) -- (McW2);
   \draw [thick] (MW3) -- (MW2);
   \draw [thick] (TcW3) -- (TcW2);
   \draw [thick] (W3) -- (W2);
   \draw [thick] (McW2) -- (TcW2) -- (W2);
   \draw [thick] (McW2) -- (MW2) -- (W2);
   \draw [thick] (McW2) -- (Mc);
   \draw [thick] (MW2) -- (M1);
   \draw [thick] (TcW2) to[out=60,in=-155] (Tc);
   \draw [thick] (W2) -- (T1);
   \draw [thick] (SM) -- (McU2);
   \draw [thick] (SM) -- (McW2);
   \draw [thick] (Lc) -- (LS) -- (L);
   \draw [thick] (Lc) -- (L0) -- (L);
   \draw [thick] (Lc) -- (L1) -- (L);
   \draw [thick] (Lc) to[out=120,in=-120] (Sc);
   \draw [thick] (LS) to[out=60,in=-60] (S);
   \draw [thick] (L0) -- (T0);
   \draw [thick] (L1) -- (T1);
   \draw [thick] (L) to[out=125,in=-125] (Omega);
   \draw [thick] (SM) -- (Sc) -- (S);
   \draw [thick] (Sc) to[out=142,in=-134] (Tc);
   \draw [thick] (S) to[out=42,in=-42] (Omega);
   \draw [thick] (Mc) -- (M0) -- (M);
   \draw [thick] (Mc) -- (M1) -- (M);
   \draw [thick] (Mc) to[out=120,in=-120] (Tc);
   \draw [thick] (M0) -- (T0);
   \draw [thick] (M1) -- (T1);
   \draw [thick] (M) to[out=55,in=-55] (Omega);
   \draw [thick] (Tc) -- (T0) -- (Omega);
   \draw [thick] (Tc) -- (T1) -- (Omega);
\end{tikzpicture}
}
\end{center}
\caption{Post's lattice.}
\label{fig:Post}
\end{figure}
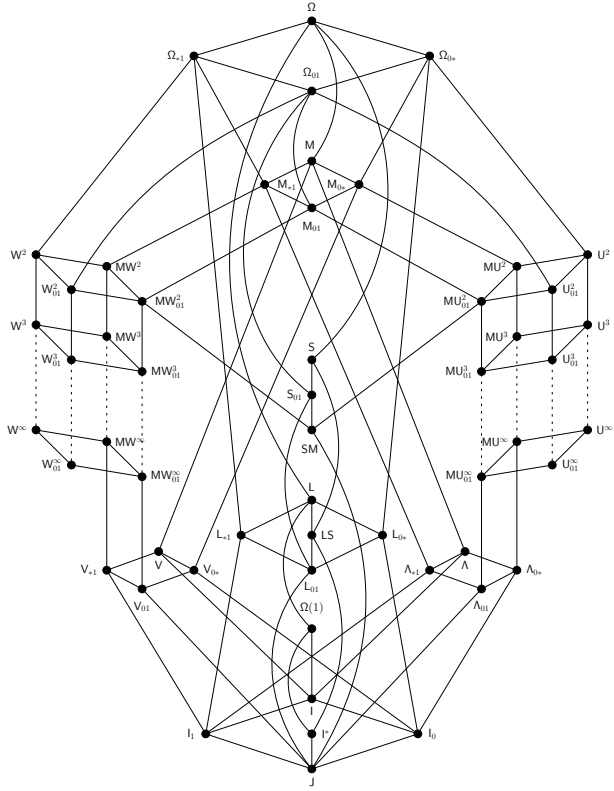

\begin{remark}
Dualization $C \mapsto C^\mathrm{d}$ is the only nontrivial automorphism of Post's lattice.
In Figure~\ref{fig:Post}, dualization corresponds to taking the mirror image along the central vertical line going through vertices $\clAll$ and $\clIc$.
\end{remark}

As a notational tool,
for sets $\mathcal{K}$ and $\mathcal{K}'$ of clones on $\{0,1\}$, we denote by $[\mathcal{K}, \mathcal{K}']$ the \emph{interval}
\[
\{ \, C \mid \text{$C$ is a clone on $\{0,1\}$ such that $K \subseteq C \subseteq K'$ for some $K \in \mathcal{K}$ and $K' \in \mathcal{K}'$} \, \}
\]
in Post's lattice.
We usually simplify this notation when one of the sets $\mathcal{K}$ and $\mathcal{K}'$ is a singleton by dropping set brackets;
thus, for example, we may write
$[\clLc,\clL]$ and $[\{\clSM, \clMcUk{\ell}, \clMcWk{\ell} \},\clAll]$
in place of 
$[\{\clLc\}, \{\clL\}]$ and $[\{\clSM, \clMcUk{\ell}, \clMcWk{\ell} \}, \{\clAll\}]$.


\subsection{Useful facts}
\label{sec:helpful}

For the needs of subsequent sections, we recall from the literature a few useful facts about clones, clonoids, and function class composition.

Although the composition of functions is associative, function class composition is not.

\begin{lemma}[{Couceiro, Foldes \cite[Associativity Lemma]{CouFol-2007,CouFol-2009}}]
\label{lem:Associativity}
Let $A$, $B$, $C$, and $D$ be arbitrary nonempty sets, and let $I \subseteq \mathcal{F}_{CD}$, $J \subseteq \mathcal{F}_{BC}$, $K \subseteq \mathcal{F}_{AB}$.
Then the following statements hold.
\begin{enumerate}[label=\upshape{(\roman*)}]
\item $(IJ)K \subseteq I(JK)$.
\item If $J$ is a minion, then $(IJ)K = I(JK)$.
\end{enumerate}
\end{lemma}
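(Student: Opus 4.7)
My plan is to unpack the definition of function class composition on both sides of each inclusion and verify the containment by reorganizing the constituents, relying on associativity of ordinary function composition for (i) and supplementing with the minion property of $J$ for (ii). For (i), take $h \in (IJ)K$: by definition $h = \varphi(k_1, \dots, k_n)$ for some $\varphi \in (IJ)^{(n)}$ and $k_1, \dots, k_n \in K^{(p)}$, and $\varphi = f(g_1, \dots, g_\ell)$ for some $f \in I^{(\ell)}$ and $g_1, \dots, g_\ell \in J^{(n)}$. Associativity of ordinary (pointwise) function composition rewrites this as $h = f(g_1(k_1, \dots, k_n), \dots, g_\ell(k_1, \dots, k_n))$; since each inner $g_i(k_1, \dots, k_n)$ lies in $(JK)^{(p)}$, we obtain $h \in I(JK)$.

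For (ii), take $h \in I(JK)$, so $h = f(h_1, \dots, h_\ell)$ with $f \in I^{(\ell)}$ and $h_i = g_i(k_{i,1}, \dots, k_{i,n_i})$ for some $g_i \in J^{(n_i)}$ and $k_{i,j} \in K^{(p)}$. The obstruction is that the inner arities $n_i$ may differ with $i$, so $h$ cannot be factored immediately through a single common tuple of elements of $K$. My fix is to concatenate: set $N := n_1 + \dots + n_\ell$, list all the $k_{i,j}$ as a single tuple $(k'_1, \dots, k'_N)$ with entries in $K^{(p)}$, and replace each $g_i$ by the $N$-ary minor $g'_i := g_i(\pr_{a_i+1}^{(N)}, \dots, \pr_{a_i+n_i}^{(N)})$, where $a_i := n_1 + \dots + n_{i-1}$ is the offset locating $g_i$'s block. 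Because $J$ is a minion, hence closed under right composition with projections, each $g'_i$ lies in $J$; thus $f(g'_1, \dots, g'_\ell) \in IJ$, and a direct evaluation confirms $f(g'_1, \dots, g'_\ell)(k'_1, \dots, k'_N) = h$, placing $h$ in $(IJ)K$.

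The main obstacle is precisely this arity-alignment step in (ii): it is the sole place where the minion hypothesis on $J$ is essential, and it reflects the well-known fact that function class composition fails to be associative in general unless the middle class can absorb the reindexing of variables via projections.
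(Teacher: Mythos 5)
Your proof is correct and is the standard argument for the Associativity Lemma; the paper itself cites this result from Couceiro and Foldes without reproducing a proof, so there is nothing to compare against beyond the original source. Both halves are sound: part (i) is the superassociativity of composition, and in part (ii) your arity-alignment via the padding projections $\pr^{(N)}_{a_i+1},\dots,\pr^{(N)}_{a_i+n_i}$ is exactly where the hypothesis $J\clProj{B}\subseteq J$ is needed, which correctly isolates the role of the minion assumption.
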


The following is a consequence of the monotonicity of function class composition.

\begin{lemma}[{\cite[Lemma~2.16]{CouLeh-Lcstability}}]
\label{lem:clonmon}
Let $C_1$ and $C'_1$ be clones on $A$ and $C_2$ and $C'_2$ clones on $B$ such that $C_1 \subseteq C'_1$ and $C_2 \subseteq C'_2$.
Then every $(C'_1,C'_2)$\hyp{}clonoid is a $(C_1,C_2)$\hyp{}clonoid.
\end{lemma}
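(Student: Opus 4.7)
The plan is to simply unfold the definitions of $(C_1,C_2)$\hyp{}clonoid and $(C'_1,C'_2)$\hyp{}clonoid and then invoke the monotonicity of function class composition noted immediately after the definition of composition in Section~\ref{subsec:Bf}'s preceding subsection. Concretely, let $K \subseteq \mathcal{F}_{AB}$ be an arbitrary $(C'_1,C'_2)$\hyp{}clonoid, so that by definition $K C'_1 \subseteq K$ and $C'_2 K \subseteq K$. I must verify the two stability conditions $K C_1 \subseteq K$ and $C_2 K \subseteq K$ required of a $(C_1,C_2)$\hyp{}clonoid.

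For the first inclusion, I would combine the hypothesis $C_1 \subseteq C'_1$ with monotonicity of composition (applied with left factor $K \subseteq K$ and right factor $C_1 \subseteq C'_1$) to obtain $K C_1 \subseteq K C'_1$; chaining this with the known inclusion $K C'_1 \subseteq K$ yields $K C_1 \subseteq K$. Symmetrically, from $C_2 \subseteq C'_2$ and monotonicity together with $C'_2 K \subseteq K$, I get $C_2 K \subseteq C'_2 K \subseteq K$. Both stability conditions therefore hold, so $K$ is a $(C_1,C_2)$\hyp{}clonoid, as required.

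There is essentially no obstacle here: the statement is an immediate formal consequence of the monotonicity of function class composition, which itself is a direct consequence of the set\hyp{}theoretic definition of composition given earlier. The reason for isolating the lemma is its repeated use in later sections — in particular, it is what will let us transfer known descriptions of $(C'_1,C'_2)$\hyp{}clonoids for larger pairs of clones downward, so as to furnish examples of $(C_1,C_2)$\hyp{}clonoids for the smaller pairs considered in Section~\ref{sec:some}.
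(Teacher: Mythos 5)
Your proof is correct and follows exactly the route the paper indicates: the lemma is stated there as a direct consequence of the monotonicity of function class composition, and your unfolding of the two stability conditions via $K C_1 \subseteq K C'_1 \subseteq K$ and $C_2 K \subseteq C'_2 K \subseteq K$ is precisely that argument.
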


The $(C_1,C_2)$\hyp{}clonoid generated by $F$ has a simple description in terms of function class composition.
Note the placement of brackets (cf.\ Lemma~\ref{lem:Associativity}).

\begin{lemma}[{\cite[Lemma~2.5]{Lehtonen-SM}}]
Let $F \subseteq \mathcal{F}_{AB}$, and let $C_1$ and $C_2$ be clones on $A$ and $B$, respectively.
Then $\gen[(C_1,C_2)]{F} = C_2 ( F C_1 )$.
\end{lemma}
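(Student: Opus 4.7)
The plan is a standard "contains $F$, is a clonoid, and is contained in every clonoid containing $F$" argument, with the main subtlety being that function class composition is not associative in general, so one must be careful about how the Associativity Lemma is applied.

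First I would verify that $F \subseteq C_2(FC_1)$. Since $C_1$ contains the projection $\pr_1^{(n)}$ for every $n$, one sees that $F \subseteq FC_1$ by writing each $f \in F^{(n)}$ as $f = f(\pr_1^{(n)}, \dots, \pr_n^{(n)})$. Applying the same observation with $C_2$ (using the unary projection $\pr_1^{(1)} = \id_B$) gives $FC_1 \subseteq C_2(FC_1)$, and hence $F \subseteq C_2(FC_1)$.

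Next I would show that $C_2(FC_1)$ is a $(C_1,C_2)$-clonoid. For stability under right composition with $C_1$, Lemma~\ref{lem:Associativity}(i) gives
\[
\bigl(C_2(FC_1)\bigr) C_1 \subseteq C_2\bigl((FC_1)C_1\bigr) \subseteq C_2\bigl(F(C_1 C_1)\bigr) \subseteq C_2(FC_1),
\]
where the last two inclusions use Lemma~\ref{lem:Associativity}(i) again together with $C_1 C_1 \subseteq C_1$ and monotonicity of composition. For stability under left composition with $C_2$, note that $C_2$ is a clone, hence in particular a minion, so Lemma~\ref{lem:Associativity}(ii) applies with $J = C_2$ in the middle and yields the equality
\[
C_2\bigl(C_2 (FC_1)\bigr) = (C_2 C_2)(FC_1) \subseteq C_2(FC_1),
\]
the final inclusion following from $C_2 C_2 \subseteq C_2$ and monotonicity. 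This is the step where I expect the main (very minor) obstacle: one must notice that part~(ii), not part~(i), of the Associativity Lemma is required here, because part~(i) alone gives the opposite inclusion $(C_2 C_2)(FC_1) \subseteq C_2(C_2(FC_1))$, which is useless.

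Finally I would establish minimality. Let $K$ be any $(C_1,C_2)$-clonoid with $F \subseteq K$. Then $FC_1 \subseteq KC_1 \subseteq K$ by monotonicity and the defining property $KC_1 \subseteq K$, and in turn $C_2(FC_1) \subseteq C_2 K \subseteq K$. Since $\gen[(C_1,C_2)]{F}$ is by definition the intersection of all $(C_1,C_2)$-clonoids containing $F$, taking the intersection over all such $K$ yields $C_2(FC_1) \subseteq \gen[(C_1,C_2)]{F}$, and the reverse inclusion follows from the previous paragraph and the fact that $C_2(FC_1)$ is itself a $(C_1,C_2)$-clonoid containing $F$. Hence $\gen[(C_1,C_2)]{F} = C_2(FC_1)$.
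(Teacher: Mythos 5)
Your proof is correct, and it is the standard argument (the paper only cites this lemma from \cite[Lemma~2.5]{Lehtonen-SM} without reproducing a proof): show $C_2(FC_1)$ contains $F$ via projections, verify it is a $(C_1,C_2)$\hyp{}clonoid using the Associativity Lemma, and check minimality. You correctly identified the one delicate point, namely that left stability requires part~(ii) of the Associativity Lemma (applicable since the clone $C_2$ is a minion), because part~(i) gives only the useless inclusion.
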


For Boolean functions,
taking inner negations, outer negations, or duals gives an isomorphism
between the lattice of $(C_1,C_2)$\hyp{}clonoids and that of clonoids with a dualized source or target clone.

\begin{proposition}[{\cite[Lemma~5.2]{Lehtonen-SM}, \cite[Proposition~3.5]{Lehtonen-ess-lin-sem-sep}}]
\label{prop:Knid}
Let $C_1$ and $C_2$ be clones on $\{0,1\}$, and let $K \subseteq \clAll$.
Then the following statements are equivalent:
\begin{enumerate}[label=\upshape{(\alph*)}]
\item\label{prop:Knid:K} $K$ is a $(C_1,C_2)$\hyp{}clonoid.
\item\label{prop:Knid:Kn} $K^\mathrm{n}$ is a $(C_1^\mathrm{d},C_2)$\hyp{}clonoid.
\item\label{prop:Knid:Ki} $\overline{K}$ is a $(C_1,C_2^\mathrm{d})$\hyp{}clonoid.
\item\label{prop:Knid:Kd} $K^\mathrm{d}$ is a $(C_1^\mathrm{d},C_2^\mathrm{d})$\hyp{}clonoid.
\end{enumerate}
\end{proposition}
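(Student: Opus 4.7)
The plan is to reduce all three nontrivial equivalences to a handful of composition identities describing how the operations $\overline{\cdot}$, $(\cdot)^{\mathrm{n}}$, and $(\cdot)^{\mathrm{d}}$ interact with function composition. Once these identities are in hand, each of the two defining closure conditions of a $(C_1,C_2)$-clonoid translates termwise between the four cases, and no special property of $C_1$ or $C_2$ is used.

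Concretely, I first verify the pointwise identities
\[
\overline{f(g_1,\dots,g_n)} = \overline{f}(g_1,\dots,g_n) = f^{\mathrm{d}}(\overline{g_1},\dots,\overline{g_n}),
\]
\[
\bigl(f(g_1,\dots,g_n)\bigr)^{\mathrm{n}} = f(g_1^{\mathrm{n}},\dots,g_n^{\mathrm{n}}) = f^{\mathrm{n}}(g_1^{\mathrm{d}},\dots,g_n^{\mathrm{d}}),
\]
\[
\bigl(f(g_1,\dots,g_n)\bigr)^{\mathrm{d}} = f^{\mathrm{d}}(g_1^{\mathrm{d}},\dots,g_n^{\mathrm{d}}),
\]
each checked by evaluating both sides at an arbitrary $\vect{a}\in\{0,1\}^m$ and using $\overline{\overline{x}}=x$. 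Lifting these to function classes yields
$\overline{FG}=\overline{F}\,G = F^{\mathrm{d}}\,\overline{G}$, $(FG)^{\mathrm{n}} = F\,G^{\mathrm{n}} = F^{\mathrm{n}}\,G^{\mathrm{d}}$, and $(FG)^{\mathrm{d}} = F^{\mathrm{d}}G^{\mathrm{d}}$. I also use that dualization is an automorphism of Post's lattice, so $C_i^{\mathrm{d}}$ is a clone whenever $C_i$ is, and that the three operators $K\mapsto \overline{K},\, K^{\mathrm{n}},\, K^{\mathrm{d}}$ are mutually commuting involutions with $K^{\mathrm{d}}=\overline{K^{\mathrm{n}}}$.

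For (a)$\Rightarrow$(b), assume $KC_1\subseteq K$ and $C_2K\subseteq K$; the class identities immediately give $K^{\mathrm{n}}C_1^{\mathrm{d}}=(KC_1)^{\mathrm{n}}\subseteq K^{\mathrm{n}}$ and $C_2K^{\mathrm{n}}=(C_2K)^{\mathrm{n}}\subseteq K^{\mathrm{n}}$, which is (b). The reverse direction follows by applying the same implication with $K$ replaced by $K^{\mathrm{n}}$ and $C_1$ by $C_1^{\mathrm{d}}$, invoking involutivity. The equivalence (a)$\Leftrightarrow$(c) is strictly analogous, via $\overline{K}\,C_1=\overline{KC_1}$ and $C_2^{\mathrm{d}}\,\overline{K}=\overline{C_2K}$. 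Finally, (a)$\Leftrightarrow$(d) drops out by chaining the first two equivalences, since $K^{\mathrm{d}}=\overline{K^{\mathrm{n}}}$ and $(C_i^{\mathrm{d}})^{\mathrm{d}}=C_i$.

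No step presents a real obstacle: the content is pure bookkeeping of where each negation or dual lands after composition. The one subtlety deserving attention is the asymmetry in the inner-negation identity — inner negation on the composite $FG$ equals inner negation applied only to the \emph{inner} factor $G$, which is precisely why $C_1$ must be dualized in (b) while $C_2$ is left unchanged; the analogous asymmetry for outer negation accounts for the switch of $C_2$ in (c).
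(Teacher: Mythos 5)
Your proof is correct: the composition identities you state are exactly right (in particular the asymmetries $(FG)^{\mathrm{n}} = F\,G^{\mathrm{n}} = F^{\mathrm{n}}G^{\mathrm{d}}$ and $\overline{FG} = \overline{F}\,G = F^{\mathrm{d}}\,\overline{G}$, which are what force the dualization of $C_1$ in (b) and of $C_2$ in (c)), and the reduction of (d) to (b) and (c) via $K^{\mathrm{d}} = \overline{K^{\mathrm{n}}}$ is sound. The paper itself only cites this proposition from earlier work without reproducing a proof, and your argument is the standard one those references use, so there is nothing further to compare.
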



\section{Clonoids with a linear source clone and a semilattice or $0$- or $1$-separating target clone}
\label{sec:uncountable}

The main result of this paper is the following theorem that shows that for certain pairs $(C_1,C_2)$ of clones, the lattice $\closys{(C_1,C_2)}$ of $(C_1,C_2)$\hyp{}clonoids is uncountably infinite.

\begin{theorem}
\label{thm:Ulin}
For clones $C_1$ and $C_2$ such that $C_1 \subseteq K_1$ and $C_2 \subseteq K_2$ for some
\[
\begin{split}
(K_1,K_2) \in \{
   & (\clL, \clLambda), (\clLo, \clUk{\infty}), (\clLi, \clUk{\infty}), (\clLS, \clUk{\infty}), 
\\ & (\clL, \clV), (\clLo, \clWk{\infty}), (\clLi, \clWk{\infty}), (\clLS, \clWk{\infty})
\},
\end{split}
\]
there are an uncountable infinitude of $(C_1,C_2)$\hyp{}clonoids.
\end{theorem}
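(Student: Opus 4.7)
The plan is to execute the separating-family program outlined in the introduction. By Lemma~\ref{lem:clonmon}, uncountability of $\closys{(K_1,K_2)}$ for the listed maximal pairs implies the same for every $(C_1,C_2)$ with $C_1 \subseteq K_1$ and $C_2 \subseteq K_2$, so it suffices to handle the eight maximal pairs. Furthermore, since $\clL^{\mathrm{d}} = \clL$, $\clLS^{\mathrm{d}} = \clLS$, $\clLo^{\mathrm{d}} = \clLi$, $\clLambda^{\mathrm{d}} = \clV$, and $(\clUk{\infty})^{\mathrm{d}} = \clWk{\infty}$, Proposition~\ref{prop:Knid} pairs these eight cases into four dual classes, so it is enough to treat the four pairs $(\clL, \clLambda)$, $(\clLo, \clUk{\infty})$, $(\clLi, \clUk{\infty})$, and $(\clLS, \clUk{\infty})$; the remaining four follow immediately via the lattice isomorphism $K \mapsto K^{\mathrm{d}}$.

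For each of the four remaining pairs $(K_1,K_2)$, I would exhibit a countably infinite family $F = \{\, f_k : k \in \IN^{+} \,\} \subseteq \clAll$ with the separation property $\gen[(K_1,K_2)]{S} \cap F = S$ for every $S \subseteq F$; equivalently, no $f \in F \setminus S$ arises as an element of $K_2(S K_1)$. Such a family immediately yields an injection $S \mapsto \gen[(K_1,K_2)]{S}$ from $\mathcal{P}(F)$ into $\closys{(K_1,K_2)}$, proving uncountability. Since $K_1 \subseteq \clL$, right composition with $K_1$ substitutes $\mathbb{F}_2$-affine combinations of variables into each $f_j$, further constrained to preserve the value at $\vect{0}$ or $\vect{1}$ when $K_1 = \clLo$ or $\clLi$; left composition with $\clLambda$ intersects true-point sets (with the freedom to insert constants), while left composition with $\clUk{\infty}$ imposes the weaker $1$-separating condition that any finite subfamily of true points of the resulting function shares a coordinate equal to $1$. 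Natural candidates for $F$ are sequences whose true-point sets form affine-geometric configurations of strictly increasing complexity---such as indicators of carefully chosen affine subspaces, or functions realising a monotone arithmetic parameter---so that a numerical invariant preserved or controlled by the admissible compositions distinguishes the $f_k$ from one another.

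The main obstacle will be the verification of this separation property, which must be carried out case by case. For each pair $(K_1,K_2)$, one needs an invariant of true-point sets that is simultaneously stable under $\mathbb{F}_2$-affine substitution of variables and under the brand of left composition that $K_2$ affords, and that takes pairwise distinct values on the $f_k$. The case $(\clL, \clLambda)$ is the most permissive on the source side but the most restrictive on the target side, whereas $(\clLS, \clUk{\infty})$ is the opposite; the intermediate cases $(\clLo, \clUk{\infty})$ and $(\clLi, \clUk{\infty})$ balance the two features against each other, and separate constructions are likely needed for all four. Once a suitable family together with a witnessing invariant has been produced in each case, the injection $S \mapsto \gen[(K_1,K_2)]{S}$ completes the proof for the maximal pairs, and the general statement follows from Lemma~\ref{lem:clonmon} and Proposition~\ref{prop:Knid}.
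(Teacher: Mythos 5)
Your overall strategy coincides with the paper's: reduce to a handful of maximal pairs via Lemma~\ref{lem:clonmon} and Proposition~\ref{prop:Knid}, exhibit a countably infinite family $F$ with $\gen[(K_1,K_2)]{S}\cap F=S$ for every $S\subseteq F$, and conclude by the power-set cardinality argument. One small inefficiency: Proposition~\ref{prop:Knid} also supplies the inner-negation isomorphism $K\mapsto K^{\mathrm{n}}$ between $(C_1,C_2)$- and $(C_1^{\mathrm{d}},C_2)$-clonoids, so $(\clLi,\clUk{\infty})$ already follows from $(\clLo,\clUk{\infty})$ without dualizing the target; only three base pairs need direct treatment, not four.

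The genuine gap is that the proposal stops precisely where the mathematical content begins: no family $F$ is constructed and no separation property is verified, and these constitute essentially the entire proof. The paper uses a single family for all base cases, namely the functions $\beta_n$ (for odd $n>6$) whose true points are the tuples of Hamming weight $1$, $2$, or $n$; the heart of the argument is the lemma that if $\beta_m\minorant\beta_n\circ\mathbf{g}$ with $\mathbf{g}$ a tuple of linear functions satisfying $\mathbf{g}(\vect{0})\in\beta_n^{-1}(0)$, then $m=n$, proved by a lengthy combinatorial analysis of how $\mathbf{g}$ can map the weight-$1$ and weight-$2$ true points into $\Pii{n}\cup\{\nset{n}\}$. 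The three target clones $\clLambda$, $\clUk{\infty}$ (with source $\clLo$), and $\clUk{\infty}$ (with source $\clLS$) each supply the hypothesis $\mathbf{g}(\vect{0})\in\beta_n^{-1}(0)$ by a different mechanism (some conjunct vanishes at $\vect{0}$; $0$-preservation; self-duality of the substituted linear maps), which is what lets one family serve all cases. Your suggested candidates point in the wrong direction: indicators of affine subspaces are exactly the functions closed under $\mathbb{F}_2$-affine preimage and intersection (they form the single clonoid $\clAff$ of Section~\ref{sec:some}), so a family of such indicators cannot have the separation property for $(\clL,\clLambda)$; the witnesses must instead be chosen with decidedly non-affine true-point sets, and proving that no admissible composition recreates one from the others is the substantive work that remains missing here.
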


The remainder of this section is devoted to the proof of this result.
The proof is based on exhibiting a countably infinite family of Boolean functions with the property that distinct subsets of this family always generate distinct $(C_1,C_2)$\hyp{}clonoids, for each pair $(C_1,C_2)$ of clones prescribed in the theorem.
Because the power set of a countably infinite set is uncountable, it follows that there are an uncountable infinitude of $(C_1,C_2)$\hyp{}clonoids.

\begin{definition}
For $S \subseteq \nset{n}$, we denote by $\vect{e}_S$ the \emph{characteristic $n$\hyp{}tuple} of $S$, i.e., the tuple $(a_1, \dots, a_n) \in \{0,1\}^n$ satisfying $a_i = 1$ if and only if $i \in S$.
We write $\vect{e}_i$ for $\vect{e}_{\{i\}}$ and $\vect{e}_{ij}$ for $\vect{e}_{\{i,j\}}$.
(The number $n$ is implicit in the notation but will be clear from the context.)
\end{definition}

\begin{definition}
Let $\vect{a}, \vect{b} \in \{0,1\}^n$.
The \emph{Hamming distance} between $\vect{a}$ and $\vect{b}$ is $d(\vect{a},\vect{b}) := \{ \, i \in \nset{n} \mid a_i \neq b_i \,\}$, the number of positions in which the two tuples are different.
The \emph{Hamming weight} of $\vect{a}$ is $w(\vect{a}) := \{ \, i \in \nset{n} \mid a_i \neq 0 \, \}$, the number of non\hyp{}zero entries of $\vect{a}$.
We clearly have $w(\vect{a}) = d(\vect{a}, \vect{0})$ and $d(\vect{a}, \vect{b}) = w(\vect{a} + \vect{b})$.
Here and elsewhere, $\vect{a} + \vect{b}$ denotes the component\hyp{}wise sum of tuples $\vect{a}$ and $\vect{b}$, with addition modulo $2$.
\end{definition}

\begin{definition}
The \emph{symmetric difference} of sets $S$ and $T$ is
$S \oplus T := (S \setminus T) \cup (T \setminus S)$.
Note that for subsets $S, T \subseteq \nset{n}$, we have $\vect{e}_S + \vect{e}_T = \vect{e}_{S \oplus T}$.
\end{definition}

\begin{definition}
The set of all $1$\hyp{}element subsets of $\nset{n}$ will be denoted by $\Pone{n}$.
The set of all $1$- or $2$\hyp{}element subsets of $\nset{n}$ will be denoted by $\Pii{n}$.
\end{definition}

The following family of functions will play a key role in the proof.

\begin{definition}
For $n \in \IN^{+}$, define $\beta_n \colon \{0,1\}^n \to \{0,1\}$ by the rule $\beta_n(\vect{a}) = 1$ if and only if $w(\vect{a}) \in \{1, 2, n\}$.
In other words, $\beta_n(\vect{a}) = 1$ if and only if $\vect{a} = \vect{e}_S$ for some $S \in \Pii{n} \cup \{ \nset{n} \}$.
For any $N \subseteq \IN^{+}$, let $B_N := \{ \, \beta_n \mid n \in N \, \}$.
\end{definition}

\begin{lemma}
\label{lem:uvw-1}
Let $\vect{u}$, $\vect{v}$ be pairwise distinct tuples in $\beta_n^{-1}(1) \setminus \{\vect{1}\}$.
Then $\vect{u} + \vect{v} + \vect{1} \in \beta_n^{-1}(0)$.
\end{lemma}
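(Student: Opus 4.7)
The plan is to reduce the claim to a comparison of Hamming weights and then perform a short combinatorial case analysis. First I would note that $\vect{u}+\vect{v}+\vect{1}$ is the bitwise complement of $\vect{u}+\vect{v}$, so by the identities for Hamming weight recorded earlier,
\[
w(\vect{u}+\vect{v}+\vect{1}) = n - w(\vect{u}+\vect{v}) = n - d(\vect{u},\vect{v}).
\]
The conclusion $\beta_n(\vect{u}+\vect{v}+\vect{1})=0$ then becomes the statement that this quantity lies outside $\{1,2,n\}$, equivalently $d(\vect{u},\vect{v}) \notin \{0,\,n-1,\,n-2\}$.

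Next I would use the structural description $\beta_n^{-1}(1) = \{\vect{e}_S : S \in \Pii{n} \cup \{\nset{n}\}\}$. The exclusion of $\vect{1}$ forces $\vect{u} = \vect{e}_S$ and $\vect{v} = \vect{e}_T$ for some $S,T \in \Pii{n}$ with $S \ne T$, so $|S|, |T| \in \{1,2\}$. The identity $\vect{e}_S+\vect{e}_T = \vect{e}_{S \oplus T}$ recorded in the preliminaries gives $d(\vect{u},\vect{v}) = |S \oplus T|$, and a split on $(|S|, |T|) \in \{(1,1),(1,2),(2,1),(2,2)\}$ immediately yields the range
\[
1 \le |S \oplus T| \le |S|+|T| \le 4,
\]
where the lower bound comes from $S \ne T$.

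Finally, combining the two steps, $d(\vect{u},\vect{v}) \in \{1,2,3,4\}$ is bounded by a small constant independent of $n$, while the forbidden values $n-1$ and $n-2$ grow with $n$, so the required exclusion $d(\vect{u},\vect{v}) \notin \{0,n-1,n-2\}$ is automatic once $n$ is large enough. The main point of care is this separation between the low-weight cluster of $\beta_n^{-1}(1)$ (tuples of weight $1$ or $2$) and the isolated extreme point $\vect{1}$ of weight $n$; very small $n$, where the cluster and $\vect{1}$ are close, should either be vacuous or handled by direct inspection, and I expect this bookkeeping, rather than any deeper argument, to be the only real issue in the proof.
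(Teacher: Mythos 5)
Your proof is correct and follows essentially the same route as the paper's: both arguments reduce to the observation that $\vect{u}$ and $\vect{v}$ each have Hamming weight $1$ or $2$, hence $1 \le w(\vect{u}+\vect{v}) \le 4$ and $n-4 \le w(\vect{u}+\vect{v}+\vect{1}) \le n-1$, which avoids $\{1,2,n\}$. Your caveat about small $n$ is well taken: the paper's proof invokes ``because $n>6$'' even though that hypothesis is absent from the lemma statement (it is inherited from the context where the lemma is applied, namely Lemma~3.10 with $m,n>6$), and indeed the claim fails for, say, $n=4$ with $\vect{u}=\vect{e}_1$, $\vect{v}=\vect{e}_2$.
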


\begin{proof}
Because $\vect{u}$ and $\vect{v}$ are distinct from $\vect{1}$, each of $\vect{u}$ and $\vect{v}$ has Hamming weight $1$ or $2$.
Moreover, because $\vect{u} \neq \vect{v}$, it follows that $1 \leq w(\vect{u} + \vect{v}) \leq 4$.
Since $w(\vect{u} + \vect{v} + \vect{1}) = n - w(\vect{u} + \vect{v})$, it follows that
$n - 4 \leq w(\vect{u} + \vect{v} + \vect{1}) \leq n - 1$.
Because $n > 6$, we conclude $\vect{u} + \vect{v} + \vect{1} \notin \beta_n^{-1}(1)$, i.e., $\vect{u} + \vect{v} + \vect{1} \in \beta_n^{-1}(0)$.
\end{proof}

\begin{lemma}
\label{lem:Si-2}
Let $(S_i)_{i \in \nset{m}}$ be a family of pairwise distinct elements of $\Pii{n}$, and let $T \in \Pii{n}$.
Assume that there are elements $r, s \in \nset{m}$ \textup{(}$r \neq s$\textup{)} such that $\card{S_r} = 2$, $S_r \neq T \neq S_s$, and for the set $C := S_r \oplus S_s \oplus T$, we have $\card{C} \notin \{1,2\}$.
Then one of the following three conditions holds:
\begin{enumerate}[label={\upshape(\roman*)}]
\item\label{lem:Si-2:pq} There exist $p, q \in \nset{m}$, $p \neq q$, such that 
$\card{S_p \oplus S_q \oplus C} \in \{0, 3, 4, 5, 6\}$.
\item\label{lem:Si-2:b} There exists $b \in \nset{n}$ such that $b \in S_i$ for all $i \in \nset{m}$.
\item\label{lem:Si-2:m5} $m \leq 5$.
\end{enumerate}
\end{lemma}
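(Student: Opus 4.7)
The plan is to argue by contradiction. Suppose (i), (ii), and (iii) all fail: for every pair $p \neq q$, $\card{S_p \oplus S_q \oplus C} \notin \{0,3,4,5,6\}$; no element of $\nset{n}$ belongs to every $S_i$; and $m \geq 6$. I aim to derive a contradiction.

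First I would constrain $\card{C}$. Since $\card{C} = \card{S_r \oplus S_s \oplus T} \leq \card{S_r} + \card{S_s} + \card{T} \leq 6$ and $\card{C}$ has the parity of $\card{S_r}+\card{S_s}+\card{T}$ with $\card{S_r}=2$, the hypothesis $\card{C} \notin \{1,2\}$ forces $\card{C} \in \{0,3,4,5,6\}$. Next, for each pair $p \neq q$ I apply the identity $\card{X \oplus Y} = \card{X}+\card{Y}-2\card{X\cap Y}$ with $X = S_p \oplus S_q$ (of size in $\{0,1,2,3,4\}$) and $Y=C$. Translating "$\card{S_p\oplus S_q\oplus C}\notin\{0,3,4,5,6\}$" into a linear equation for $\card{(S_p\oplus S_q)\cap C}$ and exploiting parity shows that only a few values of $\card{(S_p\oplus S_q)\cap C}$ are admissible: roughly, $S_p\oplus S_q$ must lie either almost entirely inside $C$ (producing weight $1$ or $2$) or almost entirely outside $C$ (producing weight $\geq 7$).

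The core of the proof is then a case analysis on $\card{C}$. When $\card{C}=0$, the condition collapses to $\card{S_p\oplus S_q}\in\{1,2\}$ for every pair, and using $S_r=\{a,b\}$ a short enumeration shows each $S_i$ lies in the narrow list $\{a\},\{b\},\{a,b\},\{a,x\},\{b,y\}$; cross\hyp{}pair constraints then force either all $S_i$ to share an element or $m \leq 3$. For $\card{C} \in \{3,4,5,6\}$ both the ``inside'' and the ``outside'' regime may occur, so I would partition the pairs $(S_i,S_j)$ by which regime they satisfy and show that each regime imposes rigid structure on its $S_i$'s (for example, in the inside regime the relevant $2$\hyp{}element $S_i$'s must be pairwise disjoint and contained in $C$, hence at most $\lfloor \card{C}/2 \rfloor$ of them exist). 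Combining these structures with the requirement that every pair satisfy at least one of the two regimes, I deduce that either a single element of $\nset{n}$ lies in every $S_i$, contradicting the failure of (ii), or at most five distinct $S_i$ can coexist, contradicting $m\geq 6$.

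The main obstacle is the mixed\hyp{}regime case analysis for $\card{C} \in \{3,4\}$: there $S_p \oplus S_q$ can straddle $C$ in several ways, and both inside\hyp{}dominant and outside\hyp{}dominant pairs may coexist in the same family, requiring careful book\hyp{}keeping of how $1$- and $2$-element subsets distribute relative to $C$. The saving grace is the dichotomy ``close to $C$ or close to its complement'', which in combination with the pairwise constraints admits only a handful of coherent configurations, each disposed of by a short combinatorial argument.
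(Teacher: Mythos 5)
Your reduction loses an essential hypothesis, and without it the statement you propose to contradict by cases is actually satisfiable. After using $T$ only to conclude $\card{C}\in\{0,3,4,5,6\}$, your case analysis works purely with $C$ and the pairwise conditions $\card{S_p\oplus S_q\oplus C}\notin\{0,3,4,5,6\}$. But no contradiction can follow from those premises alone: take $C=\{1,\dots,6\}$, $S_i=\{7,7+i\}$ for $i=1,\dots,5$, and $S_6=\{20,21\}$. Every $S_p\oplus S_q$ is disjoint from $C$, so $\card{S_p\oplus S_q\oplus C}\in\{8,10\}$ for all pairs, no element lies in all six sets, and $m=6$ --- so (i), (ii), (iii) all fail even though $\card{C}=6$. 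What saves the actual lemma is that $C=S_r\oplus S_s\oplus T$ with $T\in\Pii{n}$, so the distinguished pair $(r,s)$ is forced into your ``inside regime'' ($\card{S_r\oplus S_s\oplus C}=\card{T}\in\{1,2\}$, with $T\ne S_r,S_s$), which anchors the whole family near $C$; your sketch never invokes this after bounding $\card{C}$, so the deduction you claim cannot go through as described.

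Second, the hard part --- $\card{C}\in\{3,4,5,6\}$ --- is only asserted, not carried out, and the one structural claim you offer for it is false: a pair in the inside regime need not consist of disjoint $2$-sets contained in $C$ (e.g.\ $S_p=\{1,2\}$, $S_q=\{1,3\}$, $C=\{2,3,4,5\}$ gives $\card{S_p\oplus S_q\oplus C}=2$ while $S_p\cap S_q\ne\emptyset$ and $S_p\not\subseteq C$). For comparison, the paper sidesteps the regime dichotomy entirely: if some $k$ has $\card{S_r\oplus S_k\oplus T}\notin\{1,2\}$ or $\card{S_s\oplus S_k\oplus T}\notin\{1,2\}$, the identity $S_s\oplus S_k\oplus C=S_r\oplus S_k\oplus T$ (and its twin) immediately yields conclusion (i); otherwise every remaining $S_k$ must belong to an explicitly enumerated short list of sets compatible with both $S_r$ and $S_s$ relative to $T$, and a finite table for each shape of $T$ delivers $m\le 5$, a common element, or a suitable pair $(p,q)$. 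Your $\card{C}=0$ subcase is correct, but the rest is a plan rather than a proof, and the plan as stated rests on premises too weak to support the conclusion.
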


\begin{proof}
We have $S_r = \{a,b\}$ for some distinct $a, b \in \nset{n}$.
We consider three cases.

\textsc{Case 1:} There exists a $k \in \nset{m} \setminus \{r,s\}$ such that $\card{S_r \oplus S_k \oplus T} \notin \{1,2\}$.
Then
$S_s \oplus S_k \oplus C
= S_s \oplus S_k \oplus S_r \oplus S_s \oplus T
= S_r \oplus S_k \oplus T$.
Because $S_r, S_k, T \in \Pii{n}$, we have $\card{S_r \oplus S_k \oplus T} \leq 6$,
and we conclude that $\card{S_s \oplus S_k \oplus C} \in \{0, 3, 4, 5, 6\}$.

\textsc{Case 2:} There exists a $k \in \nset{m} \setminus \{r,s\}$ such that $\card{S_s \oplus S_k \oplus T} \notin \{1,2\}$.
The proof is similar to Case~1.

\textsc{Case 3:} Assume we are not in Case~1 nor Case~2, i.e., for all $k \in \nset{m} \setminus \{r,s\}$, we have
$\card{S_r \oplus S_k \oplus T} \in \{1,2\}$
and
$\card{S_s \oplus S_k \oplus T} \in \{1,2\}$.
We now consider four different subcases according to possible forms of the set $T$.

\textsc{Subcase 3.1:} $T = \{c,d\}$ for some distinct $c, d \in \nset{n} \setminus \{a,b\}$, and hence $S_r \oplus T = \{a, b, c, d\}$.
The sets $B \in \Pii{n}$ satisfying $\card{S_r \oplus B \oplus T} \in \{1,2\}$ and $B \neq S_r$ are
\begin{equation}
\{a,c\}, \{a,d\}, \{b,c\}, \{b,d\}, \{c,d\}.
\label{eq:3.1:B}
\end{equation}
Now, the set $S_s$ is, without loss of generality, of one of the following forms:
$\{a,c\}$, $\{a,e\}$, $\{c,e\}$, $\{e,e'\}$, $\{a\}$, $\{c\}$, $\{e\}$, where $e, e' \in \nset{n} \setminus \{a,b,c,d\}$.
We see that for each possible $S_s$, at most three of the sets $B$ listed in \eqref{eq:3.1:B} satisfy the condition
$\card{S_s \oplus B \oplus T} \in \{1,2\}$ and $B \neq S_s$, as shown in the following table.

\begin{center}
\begin{tabular}{ccl}
\toprule
$S_s$ & $S_s \oplus T$ & suitable sets $B$ \\
\midrule
$\{a,c\}$ & $\{a,d\}$ & $\{b,d\}$, $\{c,d\}$ \\
$\{a,e\}$ & $\{a,c,d,e\}$ & $\{a,c\}$, $\{a,d\}$, $\{c,d\}$ \\
$\{c,e\}$ & $\{d,e\}$ & $\{a,d\}$, $\{b,d\}$, $\{c,d\}$ \\
$\{e,e'\}$ & $\{c,d,e,e'\}$ & $\{c,d\}$ \\
$\{a\}$ & $\{a,c,d\}$ & $\{a,c\}$, $\{a,d\}$, $\{c,d\}$ \\
$\{c\}$ & $\{d\}$ & $\{a,d\}$, $\{b,d\}$, $\{c,d\}$ \\
$\{e\}$ & $\{c,d,e\}$ & $\{c,d\}$ \\
\bottomrule
\end{tabular}
\end{center}

Since each $S_k$, for $k \in \nset{m} \setminus \{r,s\}$, equals one of the suitable sets $B$,
and the $S_i$ are pairwise distinct, we must have $m - 2 \leq 3$, that is, $m \leq 5$.

\textsc{Subcase 3.2:} $T = \{c\}$ for some $c \in \nset{n} \setminus \{a,b\}$, and hence $S_r \oplus T = \{a, b, c\}$.
The sets $B \in \Pii{n}$ satisfying $\card{S_r \oplus B \oplus T} \in \{1,2\}$ and $B \neq S_r$ are
\begin{equation}
\{a,c\}, \{b,c\}, \{a\}, \{b\}, \{c\}.
\label{eq:3.2:B}
\end{equation}
Now, the set $S_s$ is, without loss of generality, of one of the following forms:
$\{a,c\}$, $\{a,e\}$, $\{b,c\}$, $\{b,e\}$, $\{c,e\}$, $\{e,e'\}$, $\{a\}$, $\{b\}$, $\{e\}$, where $e, e' \in \nset{n} \setminus \{a,b,c\}$.
We see that for each possible $S_s$, at most three of the sets $B$ listed in \eqref{eq:3.2:B} satisfy the condition
$\card{S_s \oplus B \oplus T} \in \{1,2\}$ and $B \neq S_s$, as shown in the following table.

\begin{center}
\begin{tabular}{ccl}
\toprule
$S_s$ & $S_s \oplus T$ & suitable sets $B$ \\
\midrule
$\{a,c\}$ & $\{a\}$ & $\{b\}$, $\{c\}$ \\
$\{a,e\}$ & $\{a,c,e\}$ & $\{a,c\}$, $\{a\}$, $\{c\}$ \\
$\{b.c\}$ & $\{b\}$ & $\{a\}$, $\{c\}$ \\
$\{b,e\}$ & $\{b,c,e\}$ & $\{b,c\}$, $\{b\}$, $\{c\}$ \\
$\{c,e\}$ & $\{e\}$ & $\{a\}$, $\{b\}$, $\{c\}$ \\
$\{e,e'\}$ & $\{c,e,e'\}$ & $\{c\}$ \\
$\{a\}$ & $\{a,c\}$ & $\{b,c\}$, $\{c\}$ \\
$\{b\}$ & $\{b,c\}$ & $\{a,c\}$, $\{c\}$ \\
$\{e\}$ & $\{c,e\}$ & $\{a,c\}$, $\{b,c\}$, $\{c\}$ \\
\bottomrule
\end{tabular}
\end{center}

Since each $S_k$, for $k \in \nset{m} \setminus \{r,s\}$, equals one of the suitable sets $B$,
and the $S_i$ are pairwise distinct, we must have $m - 2 \leq 3$, that is, $m \leq 5$.

\textsc{Subcase 3.3:} $T = \{a,c\}$ for some $c \in \nset{n} \setminus \{a,b\}$, and hence $S_r \oplus T = \{b, c\}$.
The sets $B \in \Pii{n}$ satisfying $\card{S_r \oplus B \oplus T} \in \{1,2\}$ and $B \neq S_r$ are precisely the following:
\begin{equation}
\text{$\{b\}$,
$\{c\}$,
$\{b,x\}$ for $x \in \nset{n} \setminus \{a, b, c\}$,
and
$\{c,y\}$ for $y \in \nset{n} \setminus \{b, c\}$.}
\label{eq:3.3:B}
\end{equation}
Now, the set $S_s$ is, without loss of generality, of one of the following forms:
$\{a,d\}$, $\{b,c\}$, $\{b,d\}$, $\{c,d\}$, $\{d,d'\}$, $\{a\}$, $\{b\}$, $\{c\}$, $\{d\}$, where $d, d' \in \nset{n} \setminus \{a,b,c\}$.
For each possible $S_s$, only some of the sets $B$ listed in \eqref{eq:3.1:B} satisfy the condition
$\card{S_s \oplus B \oplus T} \in \{1,2\}$ and $B \neq S_s$; these are shown in the following table.

\begin{center}
\begin{tabular}{ccl}
\toprule
$S_s$ & $S_s \oplus T$ & suitable sets $B$ \\
\midrule
$\{a,d\}$ & $\{c,d\}$ & $\{c\}$, $\{b,d\}$, $\{c,y\}$ for $y \in \nset{n} \setminus \{b,c,d\}$ \\
$\{b,c\}$ & $\{a,b\}$ & $\{b\}$, $\{a,c\}$, $\{b,x\}$ for $x \in \nset{n} \setminus \{a,b,c\}$ \\
$\{b,d\}$ & $\{a,b,c,d\}$ & $\{a,c\}$, $\{c,d\}$ \\
$\{c,d\}$ & $\{a,d\}$ & $\{b,d\}$, $\{a,c\}$ \\
$\{d,d'\}$ & $\{a,c,d,d'\}$ & $\{a,c\}$, $\{c,d\}$, $\{c,d'\}$ \\
$\{a\}$ & $\{c\}$ & $\{b\}$, $\{c,y\}$ for $y \in \nset{n} \setminus \{b,c\}$ \\
$\{b\}$ & $\{a,b,c\}$ & $\{c\}$, $\{a,c\}$ \\
$\{c\}$ & $\{a\}$ & $\{b\}$, $\{a,c\}$ \\
$\{d\}$ & $\{a,c,d\}$ & $\{c\}$, $\{a,c\}$, $\{c,d\}$ \\
\bottomrule
\end{tabular}
\end{center}

Recall that the sets $S_i$ are pairwise distinct and the $S_i$ for $i \in \nset{n} \setminus \{r,s\}$ are among the suitable sets $B$.
We now consider four subsubcases according to the form of the set $S_s$.

\textsc{Subsubcase 3.3.1:}
$S_s$ is one of $\{b,d\}$, $\{c,d\}$, $\{d,d'\}$, $\{b\}$, $\{c\}$, $\{d\}$.
Then there are at most three suitable sets $B$, and
we conclude that $m - 2 \leq 3$, that is, $m \leq 5$.

\textsc{Subsubcase 3.3.2:}
$S_s = \{a,d\}$ and hence $S_s \oplus T = \{c,d\}$ and $C = S_r \oplus S_s \oplus T = \{a, b, c, d\}$.
Unless $m \leq 5$ (in which case condition \ref{lem:Si-2:m5} of the lemma holds),
there exist $p, q \in \nset{m} \setminus \{r,s\}$ such that
$S_p = \{c,e\}$ and $S_q = \{c,e'\}$ for some $e, e' \in \nset{n} \setminus \{b,c,d\}$, $e \neq e'$.
Then
\[
S_p \oplus S_q \oplus C
= \{c, e\} \oplus \{c, e'\} \oplus \{a, b, c, d\}
= \{a, b, c, d, e, e'\},
\]
so $\card{S_p \oplus S_q \oplus C} = 6$.

\textsc{Subsubcase 3.3.3:}
$S_s = \{b,c\}$ and hence $S_s \oplus T = \{a,b\}$ and $C = S_r \oplus S_s \oplus T = \emptyset$.
Note that $\{a,c\}$ is the only one of the suitable sets that does not contain $b$.
Therefore, if none of the sets $S_i$ equals $\{a,c\}$, then $b \in S_i$ for all $i \in \nset{m}$,
so condition \ref{lem:Si-2:b} of the lemma holds.

Assume now that $S_p = \{a,c\}$ for some $p \in \nset{m}$.
Unless $m \leq 3$ (in which case condition \ref{lem:Si-2:m5} of the lemma holds),
there exists $q \in \nset{n} \setminus \{p, r, s\}$ such that $S_q = \{b\}$ or $S_q = \{b,e\}$ for some $e \in \nset{n} \setminus \{a, b, c\}$.
Then
$S_p \oplus S_q \oplus C$ equals either $\{a,b,c\}$ or $\{a,b,c,e\}$,
so $\card{S_p \oplus S_q \oplus C} \in \{3,4\}$.

\textsc{Subsubcase 3.3.4:}
$S_s = \{a\}$ and hence $S_s \oplus T = \{c\}$ and $C = \{a,b,c\}$.
Unless $m \leq 5$ (in which case condition \ref{lem:Si-2:m5} of the lemma holds),
there exist $p, q \in \nset{m} \setminus \{r,s\}$ such that $S_p = \{c,e\}$ and $S_q = \{c,e'\}$ for some distinct $e, e' \in \nset{n} \setminus \{a, b, c\}$.
Then $S_p \oplus S_q \oplus C = \{a,b,c,e,e'\}$,
so $\card{S_p \oplus S_q \oplus C} = 5$.

\textsc{Subcase 3.4:} $T = \{a\}$, and hence $S_r \oplus T = \{b\}$.
The sets $B \in \Pii{n}$ satisfying $\card{S_r \oplus B \oplus T} \in \{1,2\}$ and $B \neq S_r$ are precisely the following:
\begin{equation}
\text{$\{x\}$ for $x \in \nset{n} \setminus \{b\}$,
and
$\{b,y\}$ for $y \in \nset{n} \setminus \{a,b\}$.}
\label{eq:3.4:B}
\end{equation}
Now, the set $S_s$ is, without loss of generality, of one of the following forms:
$\{a,c\}$, $\{b,c\}$, $\{c,d\}$, $\{b\}$, $\{c\}$, where $c, d \in \nset{n} \setminus \{a,b\}$.
For each possible $S_s$, only some of the sets $B$ listed in \eqref{eq:3.1:B} satisfy the condition
$\card{S_s \oplus B \oplus T} \in \{1,2\}$ and $B \neq S_s$; these are shown in the following table.

\begin{center}
\begin{tabular}{ccl}
\toprule
$S_s$ & $S_s \oplus T$ & suitable sets $B$ \\
\midrule
$\{a,c\}$ & $\{c\}$ & $\{b,c\}$, $\{x\}$ for $x \in \nset{n} \setminus \{b,c\}$ \\
$\{b,c\}$ & $\{a,b,c\}$ & $\{a\}$, $\{c\}$, $\{b,c\}$ \\
$\{c,d\}$ & $\{a,c,d\}$ & $\{a\}$, $\{c\}$, $\{d\}$ \\
$\{b\}$ & $\{a,b\}$ & $\{a\}$, $\{b,y\}$ for $y \in \nset{n} \setminus \{a,b\}$ \\
$\{c\}$ & $\{a,c\}$ & $\{a\}$, $\{c\}$, $\{b,c\}$ \\
\bottomrule
\end{tabular}
\end{center}

Recall that the sets $S_i$ are pairwise distinct and the $S_i$ for $i \in \nset{n} \setminus \{r,s\}$ are among the suitable sets $B$.
We now consider three subsubcases according to the form of the set $S_s$.

\textsc{Subsubcase 3.4.1:}
$S_s$ is one of $\{b,c\}$, $\{c,d\}$, $\{c\}$.
Then there are only three suitable sets $B$, and
we conclude that $m - 2 \leq 3$, that is, $m \leq 5$.

\textsc{Subsubcase 3.4.2:}
$S_s = \{a,c\}$ and hence $S_s \oplus T = \{c\}$ and $C = \{a,b,c\}$.
Unless $m \leq 5$ (in which case condition \ref{lem:Si-2:m5} of the lemma holds),
there exist $p, q \in \nset{m} \setminus \{r,s\}$ such that $S_p = \{e\}$ and $S_q = \{e'\}$ for some distinct $e, e' \in \nset{n} \setminus \{a, b, c\}$.
Then $S_p \oplus S_q \oplus C = \{a,b,c,e,e'\}$,
and so $\card{S_p \oplus S_q \oplus C} = 5$.

\textsc{Subsubcase 3.4.3:}
$S_s = \{b\}$ and hence $S_s \oplus T = \{a,b\}$ and $C = \emptyset$.
Note that $\{a\}$ is the only one of the suitable sets $B$ that does not contain $b$.
Therefore, if none of the sets $S_i$ equals $\{a\}$, then $b \in S_i$ for all $i \in \nset{m}$,
so condition \ref{lem:Si-2:b} of the lemma holds.

Assume now that $S_p = \{a\}$ for some $p \in \nset{m}$.
Unless $m \leq 3$ (in which case condition \ref{lem:Si-2:m5} of the lemma holds),
there exists $q \in \nset{n} \setminus \{p, r, s\}$ such that $S_q = \{b,e\}$ for some $e \in \nset{n} \setminus \{a, b\}$.
Then
$S_p \oplus S_q \oplus C = \{a,b,e\}$,
and so $\card{S_p \oplus S_q \oplus C} = 3$.
\end{proof}

\begin{lemma}
\label{lem:meqn}
Let $m$ and $n$ be odd integers greater than $6$,
and assume that
$\beta_m \minorant \varphi = \beta_n \circ \mathbf{g}$, where
$\mathbf{g} = (g_1, \dots, g_n) \colon \{0,1\}^m \to \{0,1\}^n$ for some $g_1, \dots, g_n \in \clL$
such that $\mathbf{g}(\vect{0}) =: \vect{c} \in \beta_m^{-1}(0)$.
Then $m = n$.
\end{lemma}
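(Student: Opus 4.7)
The plan is to convert the minorant hypothesis into affine-algebraic data, invoke Lemma~\ref{lem:Si-2} to force all coordinate images to share a common coordinate $b \in \nset{n}$, and finish with a parity computation on $\mathbf{g}(\vect{1})$.

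\textbf{Translation.} Since each $g_i$ is affine over $\{0,1\}$, write $\mathbf{g}(\vect{x}) = M\vect{x} + \vect{c}$ with $M$ an $n \times m$ matrix and $\vect{c} = \mathbf{g}(\vect{0}) = \vect{e}_{C_*}$, where $\card{C_*} \notin \{1,2,n\}$. For each $i \in \nset{m}$, let $S_i \subseteq \nset{n}$ satisfy $\mathbf{g}(\vect{e}_i) = \vect{e}_{S_i}$. The identity $\mathbf{g}(\vect{x}+\vect{y}+\vect{z}) = \mathbf{g}(\vect{x})+\mathbf{g}(\vect{y})+\mathbf{g}(\vect{z})$ (affine maps with odd arity) gives $\mathbf{g}(\vect{e}_{ij}) = \vect{e}_{S_i \oplus S_j \oplus C_*}$ and, using $m$ odd, $\mathbf{g}(\vect{1}) = \vect{e}_{S_1 \oplus \cdots \oplus S_m}$. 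Because $\beta_m \minorant \beta_n \circ \mathbf{g}$ maps true points to true points: $S_i \in \Pii{n} \cup \{\nset{n}\}$ for each $i$, $\card{S_i \oplus S_j \oplus C_*} \in \{1,2,n\}$ for $i \neq j$, and $\card{S_1 \oplus \cdots \oplus S_m} \in \{1,2,n\}$. An immediate consequence is that the $S_i$'s are pairwise distinct and at most one equals $\nset{n}$, since any collision collapses $\card{S_i \oplus S_j \oplus C_*}$ to $\card{C_*}$, which is forbidden.

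\textbf{Hub via Lemma~\ref{lem:Si-2}.} Assume first all $S_i \in \Pii{n}$; the case where one $S_{i_0} = \nset{n}$ is handled by an analogous analysis. If every $\card{S_i} = 1$, the distinct singletons $\{x_i\} = S_i$ give $\mathbf{g}(\vect{1}) = \vect{e}_{\{x_1,\dots,x_m\}}$ of weight $m$, forcing $m = n$ from $m > 6$ and $m \in \{1,2,n\}$. Otherwise, fix $r$ with $\card{S_r} = 2$. The values $\mathbf{g}(\vect{e}_{rs})$ for $s \neq r$ cannot all equal $\vect{1}$, for then $\mathbf{g}(\vect{e}_s) = \vect{1} + \mathbf{g}(\vect{e}_r) + \vect{c}$ would coincide for all $s \neq r$, contradicting distinctness (as $m - 1 > 1$). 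Pick $s$ with $\mathbf{g}(\vect{e}_{rs}) \neq \vect{1}$ and set $T := S_r \oplus S_s \oplus C_*$, which then lies in $\Pii{n}$. Since $\card{S_r}, \card{S_s} \in \{1,2\}$ while $\card{C_*} \notin \{1,2\}$, we have $S_r, S_s \neq C_*$, whence $T \neq S_r, S_s$. Applying Lemma~\ref{lem:Si-2} to the family $(S_i)$ with these $r, s, T$, one has $C = C_*$: alternative~(iii) fails because $m > 5$, and alternative~(i) would produce $p \neq q$ with $\card{S_p \oplus S_q \oplus C_*} \in \{0,3,4,5,6\}$, which is incompatible with $\{1,2,n\}$ since $n > 6$. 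So alternative~(ii) must hold: there exists $b \in \nset{n}$ with $b \in S_i$ for all $i$.

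\textbf{Conclusion by parity.} Each $S_i$ is now $\{b\}$ or $\{b,a_i\}$ with $a_i \neq b$, so distinctness yields $m \leq n$. Consider $T_* := S_1 \oplus \cdots \oplus S_m$: since $m$ is odd and $b \in S_i$ for all $i$, $b \in T_*$. If $\card{T_*} = 1$, every $x \neq b$ lies in an even (hence, by distinctness, zero) number of $S_i$, forcing every $S_i = \{b\}$ and $m = 1$; if $\card{T_*} = 2$, an analogous argument gives $m \leq 2$. Both contradict $m > 6$, so $\card{T_*} = n$. Then every $x \neq b$ lies in an odd number of $S_i$'s, which by distinctness must be exactly $1$; so each of the $n-1$ sets $\{b,x\}$ appears exactly once, and with at most one extra $S_i = \{b\}$ we get $m \in \{n-1,n\}$. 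Since $m$ and $n$ are both odd, $m \neq n-1$, and we conclude $m = n$. The main obstacle is the hub construction: choosing $T$ exactly as $S_r \oplus S_s \oplus C_*$ so that the $C$ of Lemma~\ref{lem:Si-2} equals the forbidden $C_*$, and verifying that its alternatives (i) and (iii) are both ruled out by $m, n > 6$ and by $\card{S_p \oplus S_q \oplus C_*} \in \{1,2,n\}$; the subcase where some $S_{i_0} = \nset{n}$ requires separate bookkeeping of the additional constraint $\card{S_j \oplus C_*} \in \{n{-}1, n{-}2\}$ obtained from $\card{S_{i_0} \oplus S_j \oplus C_*} = n - \card{S_j \oplus C_*}$.
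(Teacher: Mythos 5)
Your argument follows essentially the same route as the paper's proof: the same translation of the hypotheses into a family $(S_i)_{i\in\nset{m}}$ of subsets of $\nset{n}$ via the odd/even sum identities for affine maps, the same invocation of Lemma~\ref{lem:Si-2} with $T$ chosen so that the resulting $C$ equals the support $C_*$ of $\vect{c}$ (whence alternatives \ref{lem:Si-2:pq} and \ref{lem:Si-2:m5} are excluded by $m,n>6$), and the same parity count on $\mathbf{g}(\vect{1})$ to finish (you run it as a direct count giving $m\in\{n-1,n\}$, the paper as a contradiction from $m\le n-2$; these are the same computation). Your device for ensuring $T\in\Pii{n}$ --- picking $s$ with $\mathbf{g}(\vect{e}_{rs})\neq\vect{1}$, which is possible since otherwise all $\mathbf{g}(\vect{e}_s)$, $s \neq r$, would coincide --- is a correct local substitute for part of what the paper proves globally.

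There is, however, one genuine gap: the case where some $S_{i_0}=\nset{n}$, i.e.\ $\mathbf{g}(\vect{e}_{i_0})=\vect{1}$, is never actually handled. You assert it is ``handled by an analogous analysis'' and later that it ``requires separate bookkeeping,'' but it is not analogous: Lemma~\ref{lem:Si-2} requires all $S_i\in\Pii{n}$, and your parity argument presupposes $\card{S_i}\le 2$, so neither applies. The paper rules this case out up front with a dedicated argument (part (iv) of its second Claim), resting on Lemma~\ref{lem:uvw-1}: three pairwise distinct true points of $\beta_n$, one of which is $\vect{1}$, sum to a false point, yet by the three-term identity that sum equals some $\mathbf{g}(\vect{e}_{ik})\in\beta_n^{-1}(1)$. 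Alternatively, one can finish from the constraint you do record, $\card{S_j\oplus C_*}\in\{n-1,n-2\}$ for all $j\neq i_0$: for distinct $j,j'\neq i_0$ this forces $\card{S_j\oplus S_{j'}\oplus C_*}\ge (n-2)-2=n-4>2$, hence $S_j\oplus S_{j'}\oplus C_*=\nset{n}$, i.e.\ $S_j\oplus S_{j'}=\nset{n}\setminus C_*$ for every such pair; taking three distinct indices $j,j',j''\neq i_0$ (available since $m-1\ge 6$) then gives $S_{j'}=S_{j''}$, contradicting distinctness. Either way a separate argument is required, and as written your proof is incomplete at this point; the remainder is correct.
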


\begin{proof}
We establish first a few properties of the mapping $\mathbf{g}$.

\begin{claim}
\label{clm:g-prop}
The map 
$\mathbf{g}$
has the following properties.
\begin{enumerate}[label={(\roman*)}]
\item\label{clm:g-prop:oddsum}
For the ternary group $\mathbf{B} = (\{0,1\}, \mathord{+_3})$ with $\mathord{+_3}(x,y,z) = x + y + z$, $\mathbf{g}$ is a homomorphism from $\mathbf{B}^m$ to $\mathbf{B}^n$, and hence for every odd natural number $2k + 1$,
\begin{equation}
\mathbf{g} \biggl( \sum_{i=1}^{2k+1} \vect{u}_i \biggr) = \sum_{i=1}^{2k+1} \mathbf{g}(\vect{u}_i),
\label{eq:g-oddsum}
\end{equation}
and for every even natural number $2k$,
\begin{equation}
\mathbf{g} \biggl( \sum_{i=1}^{2k} \vect{u}_i \biggr) = \biggl( \sum_{i=1}^{2k} \mathbf{g}(\vect{u}_i) \biggr) + \vect{c}.
\label{eq:g-evensum}
\end{equation}

\item\label{clm:g-prop:true-false}
$\mathbf{g}$ maps $\varphi^{-1}(1)$ into $\beta_n^{-1}(1)$ and $\varphi^{-1}(0)$ into $\beta_n^{-1}(0)$.

\item\label{clm:g-prop:minorant}
$\mathbf{g}$ maps $\beta_m^{-1}(1)$ into $\beta_n^{-1}(1)$.
\end{enumerate}
\end{claim}

\begin{pfclaim}[Proof of Claim~\ref{clm:g-prop}]
\ref{clm:g-prop:oddsum}
The clone $\clL$ is generated by $\mathord{+}$ and $1$.
Since $\mathord{+}$ is a homomorphism $\mathbf{B}^2 \to \mathbf{B}$ and $1$ is a homomorphism $\mathbf{B} \to \mathbf{B}$,
it follows that $\mathbf{g}$ is a homomorphism $\mathbf{B}^m \to \mathbf{B}^n$.
This means that $\mathbf{g}(\vect{u} + \vect{v} + \vect{w}) = \mathbf{g}(\vect{u}) + \mathbf{g}(\vect{v}) + \mathbf{g}(\vect{w})$.
Repeated application of this equality yields \eqref{eq:g-oddsum}.
We get \eqref{eq:g-evensum} immediately from \eqref{eq:g-oddsum} with the help of an extra term $\vect{0}$ in the sum: $\sum_{i=1}^{2k} \vect{u}_i = (\sum_{i=1}^{2k} \vect{u}_i) + \vect{0}$.
By \eqref{eq:g-oddsum}, we then get
\[
\mathbf{g} \biggl( \sum_{i=1}^{2k} \vect{u}_i \biggr)
= \mathbf{g} \biggl( \biggl( \sum_{i=1}^{2k} \vect{u}_i \biggr) + \vect{0} \biggr)
= \biggl( \sum_{i=1}^{2k} \mathbf{g}(\vect{u}_i) \biggr) + \mathbf{g}(\vect{0})
= \biggl( \sum_{i=1}^{2k} \mathbf{g}(\vect{u}_i) \biggr) + \vect{c}.
\]

\ref{clm:g-prop:true-false}
For any $\vect{a} \in \varphi^{-1}(1)$, we have $(\beta_n \circ \mathbf{g})(\vect{a}) = 1$;
hence $\mathbf{g}(\vect{a}) \in \beta_n^{-1}(1)$.
Similarly, for any $\vect{a} \in \varphi^{-1}(0)$ we have $\mathbf{g}(\vect{a}) \in \beta_n^{-1}(0)$.

\ref{clm:g-prop:minorant}
Because $\beta_m \minorant \varphi$, 
for any $\vect{a} \in \beta_m^{-1}(1)$, we have $\vect{a} \in \varphi^{-1}(1)$ and hence $\mathbf{g}(\vect{a}) \in \beta_n^{-1}(1)$.
\end{pfclaim}

\begin{claim}
\label{clm:g-prop2}
The map $\mathbf{g}$ has the following properties.
\begin{enumerate}[label={(\roman*)}]
\item\label{clm:g-prop2:inj}
For all $i, j \in \nset{m}$, if $i \neq j$, then $\mathbf{g}(\vect{e}_i)$, $\mathbf{g}(\vect{e}_j)$, and $\mathbf{g}(\vect{e}_{ij})$ are pairwise distinct.

\item\label{clm:g-prop2:inj1}
The restriction of $\mathbf{g}$ to $\Pone{m}$ is injective.

\item\label{clm:g-prop2:ijk}
For pairwise distinct $i, j, k \in \nset{m}$, we have $\mathbf{g}(\vect{e}_{ij}) + \mathbf{g}(\vect{e}_j) + \mathbf{g}(\vect{e}_k) = \mathbf{g}(\vect{e}_{ik})$.

\item\label{clm:g-prop2:1}
For all $\vect{a} \in \beta_m^{-1}(1) \setminus \{\vect{1}\}$, $\mathbf{g}(\vect{a}) \neq \vect{1}$.
\end{enumerate}
\end{claim}

\begin{pfclaim}[Proof of Claim~\ref{clm:g-prop}]
\ref{clm:g-prop2:inj}
Let $i, j \in \nset{m}$ with $i \neq j$.
Suppose, to the contrary, that $\mathbf{g}(\vect{e}_i) = \mathbf{g}(\vect{e}_j)$.
By Claim~\ref{clm:g-prop}\ref{clm:g-prop:oddsum}, we have
\begin{equation}
\mathbf{g}(\vect{0})
= \mathbf{g}(\vect{e}_i + \vect{e}_j + \vect{e}_{ij})
= \mathbf{g}(\vect{e}_i) + \mathbf{g}(\vect{e}_j) + \mathbf{g}(\vect{e}_{ij})
= \vect{0} + \mathbf{g}(\vect{e}_{ij})
= \mathbf{g}(\vect{e}_{ij}).
\label{eq:g-prop2:inj}
\end{equation}
However, $\mathbf{g}(\vect{0}) = \vect{c} \in \beta_n^{-1}(0)$ holds by our assumptions and $\mathbf{g}(\vect{e}_{ij}) \in \beta_n^{-1}(1)$ by Claim~\ref{clm:g-prop}\ref{clm:g-prop:minorant}, a contradiction.
In a similar way, we reach a contradiction if $\mathbf{g}(\vect{e}_i) = \mathbf{g}(\vect{e}_{ij})$ or $\mathbf{g}(\vect{e}_j) = \mathbf{g}(\vect{e}_{ij})$.

\ref{clm:g-prop2:inj1}
This is an immediate consequence of \ref{clm:g-prop2:inj}.

\ref{clm:g-prop2:ijk}
As in \eqref{eq:g-prop2:inj}, we have
$\mathbf{g}(\vect{0}) = \mathbf{g}(\vect{e}_i) + \mathbf{g}(\vect{e}_j) + \mathbf{g}(\vect{e}_{ij})$
and
$\mathbf{g}(\vect{0}) = \mathbf{g}(\vect{e}_i) + \mathbf{g}(\vect{e}_k) + \mathbf{g}(\vect{e}_{ik})$.
Adding these two equations and cancelling equal terms, we get
$\mathbf{g}(\vect{e}_{ij}) + \mathbf{g}(\vect{e}_j) + \mathbf{g}(\vect{e}_k) = \mathbf{g}(\vect{e}_{ik})$.

\ref{clm:g-prop2:1}
Suppose, to the contrary, that $\mathbf{g}(\vect{a}) = \vect{1}$ for some $\vect{a} \in \beta_m^{-1}(1) \setminus \{\vect{1}\}$.
Then there exist $i, j \in \nset{n}$, $i \neq j$, such that $\vect{a} = \vect{e}_i$ or $\vect{a} = \vect{e}_{ij}$.
By \ref{clm:g-prop2:inj}, $\mathbf{g}(\vect{e}_{ij}) \neq \mathbf{g}(\vect{e}_j)$.
Because $m \geq 4$, there exists $k \in \nset{m} \setminus \{i,j\}$ such that $\mathbf{g}(\vect{e}_{ij})$, $\mathbf{g}(\vect{e}_j)$, and $\mathbf{g}(\vect{e}_k)$ are pairwise distinct.
By \ref{clm:g-prop2:ijk}, we have
$\mathbf{g}(\vect{e}_{ij}) + \mathbf{g}(\vect{e}_j) + \mathbf{g}(\vect{e}_k) = \mathbf{g}(\vect{e}_{ik})$.
The summands on the left side are pairwise distinct elements of $\beta_n^{-1}(1)$ and one of them is $\vect{1}$, so by Lemma~\ref{lem:uvw-1}, the left side is an element of $\beta_n^{-1}(0)$.
By Claim~\ref{clm:g-prop}\ref{clm:g-prop:minorant}, the right side is an element of $\beta_n^{-1}(1)$.
We have reached a contradiction.
\end{pfclaim}

Suppose, to the contrary, that $m \neq n$.

By Claims~\ref{clm:g-prop}\ref{clm:g-prop:minorant} and \ref{clm:g-prop2}\ref{clm:g-prop2:inj1}, \ref{clm:g-prop2:1},
there is a family $(S_i)_{i \in \nset{m}}$ of pairwise distinct elements of $\Pii{n}$ such that $\mathbf{g}(\vect{e}_i) = \vect{e}_{S_i}$.
Assume that there is an $r \in \nset{m}$ such that $\card{S_r} = 2$.
Let $s \in \nset{m} \setminus \{r\}$, so that $\mathbf{g}(\vect{e}_s) = S_s$.
By Claims~\ref{clm:g-prop}\ref{clm:g-prop:minorant} and \ref{clm:g-prop2}\ref{clm:g-prop2:1},
we have $\mathbf{g}(\vect{e}_{rs}) = \vect{e}_T$ for some $T \in \Pii{n}$.
By Claim~\ref{clm:g-prop2}\ref{clm:g-prop2:inj}, $S_r$, $S_s$, and $T$ are pairwise distinct.
Moreover, $\mathbf{g}(\vect{0}) = \vect{c} = \vect{e}_C$ for some $C \subseteq \nset{n}$ with $\card{C} \notin \{1, 2, n\}$.
Because $\vect{0} = \vect{e}_r + \vect{e}_s + \vect{e}_{rs}$, 
it follows from Claim~\ref{clm:g-prop}\ref{clm:g-prop:oddsum} that $C = S_r \oplus S_s \oplus T$.
Note that $\card{C} = \card{S_r \oplus S_s \oplus T} \leq 6$ because each of the sets $S_r$, $S_s$, and $T$ has cardinality at most $2$.

Now, it follows from Lemma~\ref{lem:Si-2} that one of the following conditions holds:
\begin{enumerate}[label={\upshape(\roman*)}]
\item\label{cond1} There exist $p, q \in \nset{m}$, $p \neq q$, such that $\card{S_p \oplus S_q \oplus C} \in \{0, 3, 4, 5, 6\}$.
\item\label{cond2} There exists $b \in \nset{n}$ such that $b \in S_i$ for all $i \in \nset{m}$.
\item\label{cond3} $m \leq 5$.
\end{enumerate}

Condition~\ref{cond3} cannot hold, as it contradicts our assumption that $m > 6$.

Consider now the case that condition~\ref{cond1} holds.
Then
\[
\mathbf{g}(\vect{e}_{pq})
= \mathbf{g}(\vect{e}_p) + \mathbf{g}(\vect{e}_q) + \vect{c}
= \vect{e}_{S_p} + \vect{e}_{S_q} + \vect{e}_C
= \vect{e}_{S_p \oplus S_q \oplus C}.
\]
Because $\card{S_p \oplus S_q \oplus C} \in \{0, 3, 4, 5, 6\}$ and $n > 6$, we have $\vect{e}_{S_p \oplus S_q \oplus C} \notin \beta_n^{-1}(1)$.
On the other hand, $\mathbf{g}(\vect{e}_{pq}) \in \beta_n^{-1}(1)$ by Claim~\ref{clm:g-prop}\ref{clm:g-prop:minorant}.
We have reached a contradiction.

Consider now the case that condition \ref{cond2} holds.
Then, for $i \in \nset{m}$, $S_i = \{b, x_i\}$ for some $x_i \in \nset{n}$.
Because the sets $S_i$ are pairwise distinct, the elements $x_1, \dots, x_m$ must be pairwise distinct, so $m \leq n$.
Moreover, because $m \neq n$ and $m$ and $n$ are both odd, we have $m \leq n - 2$.
Because $m$ is odd, we have
\[
\mathbf{g}(\vect{1})
= \mathbf{g} \biggl( \sum_{i=1}^m \vect{e}_i \biggr)
= \sum_{i=1}^m \mathbf{g}(\vect{e}_i)
= \sum_{i=1}^m \mathbf{e}_{S_i}
= \mathbf{e}_S,
\]
where $S = \bigoplus_{i=1}^m S_i = \{b, x_1, \dots, x_m\}$.
Because $x_1, \dots, x_m$ are pairwise distinct, we have $\card{S} \in \{m, m+1\}$, depending on whether or not $b$ is one of the $x_i$.
Because $6 < m \leq n - 2$, $\vect{e}_S \notin \beta_n^{-1}(1)$.
However, by Claim~\ref{clm:g-prop}\ref{clm:g-prop:minorant}, $\mathbf{g}(\vect{1}) \in \beta_n^{-1}$, and we have again reached a contradiction.

We are left with the case that $\card{S_i} = 1$ for all $i \in \nset{m}$.
Then the $S_i$ are pairwise distinct singletons, so we must have $m \leq n$.
Moreover, because $m \neq n$ and $m$ and $n$ are both odd, we have $m \leq n - 2$.
As above,
$\mathbf{g}(\vect{1}) = \vect{e}_S$ with $S = \bigoplus_{i=1}^m S_i$, and $\card{S} = m$.
Because $6 < m \leq n - 2$, we have $\vect{e}_S \notin \beta_n^{-1}(1)$, but $\mathbf{g}(\vect{1}) \in \beta_n^{-1}(1)$, and we have reached again a contradiction.
This concludes the proof.
\end{proof}

\begin{lemma}
\label{lem:minS}
Let $N$ be the set of odd integers greater than $6$.
Let $S \subseteq N$ and $m \in N$.
Then the following statements hold.
\begin{enumerate}[label={\upshape(\roman*)}]
\item\label{lem:minS:LLambda}
$\beta_m \in \gen[(\clL,\clLambda)]{B_S}$ if and only if $m \in S$.
\item\label{lem:minS:LoUkinf}
$\beta_m \in \gen[(\clLo,\clUk{\infty})]{B_S}$ if and only if $m \in S$.
\item\label{lem:minS:LSUkinf}
$\beta_m \in \gen[(\clLS,\clUk{\infty})]{B_S}$ if and only if $m \in S$.
\end{enumerate}
\end{lemma}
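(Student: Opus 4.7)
The ``if'' direction is immediate in all three parts: if $m \in S$ then $\beta_m \in B_S \subseteq \gen[(C_1,C_2)]{B_S}$. The substance lies in the converse, which I plan to reduce in each case to Lemma~\ref{lem:meqn}.

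Starting from the description $\gen[(C_1,C_2)]{B_S} = C_2 ( B_S C_1 )$, I would write a putative $\beta_m$ in the generated clonoid as
\[
\beta_m = h(\varphi_1, \dots, \varphi_k), \qquad \varphi_i = \beta_{n_i}(g_{i,1}, \dots, g_{i,n_i}),
\]
with $h \in C_2$, each $n_i \in S$, and each $g_{i,\ell} \in C_1 \subseteq \clL$. The common strategy is then to single out an index $j$ for which both $\beta_m \minorant \varphi_j$ and $\mathbf{g}_j(\vect{0}) \in \beta_{n_j}^{-1}(0)$; Lemma~\ref{lem:meqn} applied to $\varphi_j$ and $\mathbf{g}_j := (g_{j,1}, \dots, g_{j,n_j})$ then forces $m = n_j \in S$.

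For part~\ref{lem:minS:LLambda}, I would use the fact that every function in $\clLambda$ is either constant or a conjunction of some of its variables. Both constant cases are excluded because $\beta_m(\vect{0}) = 0$ while $\beta_m(\vect{1}) = 1$, so $h = \bigwedge_{i \in J} x_i$ for some nonempty $J \subseteq \nset{k}$ and hence $\beta_m = \bigwedge_{i \in J} \varphi_i$. This yields $\beta_m \minorant \varphi_i$ for every $i \in J$, and evaluating the conjunction at $\vect{0}$ produces some $j \in J$ with $\varphi_j(\vect{0}) = 0$, i.e.\ $\mathbf{g}_j(\vect{0}) \in \beta_{n_j}^{-1}(0)$, as required.

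For parts~\ref{lem:minS:LoUkinf} and~\ref{lem:minS:LSUkinf}, where $h \in \clUk{\infty}$, my key observation is that a non\hyp{}constant $h \in \clUk{\infty}$ has $\bigwedge h^{-1}(1) \neq \vect{0}$, so some coordinate $j$ satisfies $h \minorant \pr_j^{(k)}$; substitution then gives $\beta_m \minorant \varphi_j$. In part~\ref{lem:minS:LoUkinf} the components of $\mathbf{g}_j$ lie in $\clLo$, so $\mathbf{g}_j(\vect{0}) = \vect{0}$ automatically belongs to $\beta_{n_j}^{-1}(0)$. In part~\ref{lem:minS:LSUkinf} the self\hyp{}duality of $C_1 = \clLS$ does not force $\mathbf{g}_j(\vect{0}) = \vect{0}$, so I would argue by contradiction: if $\mathbf{g}_j(\vect{0})$ had Hamming weight in $\{1, 2, n_j\}$, then self\hyp{}duality of each $g_{j,\ell}$ would give $\mathbf{g}_j(\vect{1}) = \overline{\mathbf{g}_j(\vect{0})}$ with weight in $\{n_j - 1, n_j - 2, 0\}$; since $n_j > 6$ these values all lie outside $\{1, 2, n_j\}$, forcing $\varphi_j(\vect{1}) = 0$ and contradicting $1 = \beta_m(\vect{1}) \leq \varphi_j(\vect{1})$. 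The combinatorial heavy lifting has already been absorbed into Lemma~\ref{lem:meqn}, so the only subtlety I anticipate is engineering its hypotheses in the self\hyp{}dual case, where the weight\hyp{}complementation trick together with the assumption $n_j > 6$ is what makes the argument close.
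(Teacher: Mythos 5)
Your proposal is correct and follows essentially the same route as the paper's proof: in each part you isolate an index $j$ with $\beta_m \minorant \varphi_j$ and $\mathbf{g}_j(\vect{0}) \in \beta_{n_j}^{-1}(0)$ and then invoke Lemma~\ref{lem:meqn}, using the witness coordinate of $\clUk{\infty}$ in parts (ii)--(iii) and the weight\hyp{}complementation argument for $\clLS$ (which the paper runs directly from $\mathbf{g}^p(\vect{1})$ rather than by contradiction, an immaterial difference). Your explicit exclusion of the constant cases of $h$ is a minor tidiness improvement over the paper's presentation.
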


\begin{proof}
If $m \in S$, then clearly $\beta_m \in B_S \subseteq \gen[(C_1,C_2)]{B_S}$ for all clones $C_1$ and $C_2$.
It remains to prove the necessity.

\ref{lem:minS:LLambda}
Assume $\beta_m \in \gen[(\clL,\clLambda)]{B_S} = \clLambda ( B_S \, \clL )$.
Then $\beta_m = \bigwedge_{i=1}^k \gamma_i$ for some $k \in \IN^{+}$,
where, for $i \in \nset{k}$,
$\gamma_i = \beta_{n_i} \circ \mathbf{g}^i$, where $n_i \in S$ and $\mathbf{g}^i = (g^i_1, \dots, g^i_{n_i})$ for some $g^i_1, \dots, g^i_{n_i} \in \clL$.
We have $\beta_m \minorant \gamma_i$ for all $i \in \nset{k}$, and for every $\vect{a} \in \beta_m^{-1}(0)$, there exists an $i \in \nset{k}$ such that $\gamma_i(\vect{a}) = 0$.
In particular, there is a $j \in \nset{k}$ such that $\gamma_j(\vect{0}) = 0$, that is, $\mathbf{g}^j(\vect{0}) \in \beta_{n_j}^{-1}(0)$.
By Lemma~\ref{lem:meqn}, $m = n_j$, so $m \in S$.

\ref{lem:minS:LoUkinf}
Assume $\beta_m \in \gen[(\clLo,\clUk{\infty})]{B_S} = \clUk{\infty} ( B_S \, \clLo )$.
Then $\beta_m = \mu ( \gamma_1, \dots, \gamma_k )$ for some $k \in \IN^{+}$,
where $\mu \in \clUk{\infty}$ and, for $i \in \nset{k}$,
$\gamma_i = \beta_{n_i} \circ \mathbf{g}^i$, where $n_i \in S$ and $\mathbf{g}^i = (g^i_1, \dots, g^i_{n_i})$ for some $g^i_1, \dots, g^i_{n_i} \in \clLo$.
Because $\mu \in \clUk{\infty}$, there exists a $p \in \nset{k}$ such that in every $\vect{a} = (a_1, \dots, a_k) \in \mu^{-1}(1)$, we have $a_p = 1$.
Consequently, for every $\vect{a} \in \beta_m^{-1}(1)$, we have $(\gamma_1, \dots, \gamma_k)(\vect{a}) \in \mu^{-1}(1)$, and hence $\gamma_p(\vect{a}) = 1$.
In other words, $\beta_m \minorant \gamma_p$.
Because $g^p_1, \dots, g^p_{n_p} \in \clLo$, we have $\mathbf{g}^p(\vect{0}) = \vect{0} \in \beta_{n_p}^{-1}(0)$.
By Lemma~\ref{lem:meqn}, $m = n_p$, so $m \in S$.

\ref{lem:minS:LSUkinf}
Assume $\beta_m \in \gen[(\clLo,\clUk{\infty})]{B_S} = \clUk{\infty} ( B_S \, \clLS )$.
Then $\beta_m = \mu ( \gamma_1, \dots, \gamma_k )$ for some $k \in \IN^{+}$,
where $\mu \in \clUk{\infty}$ and, for $i \in \nset{k}$,
$\gamma_i = \beta_{n_i} \circ \mathbf{g}^i$, where $n_i \in S$ and $\mathbf{g}^i = (g^i_1, \dots, g^i_{n_i})$ for some $g^i_1, \dots, g^i_{n_i} \in \clLS$.
Because $\mu \in \clUk{\infty}$, there exists a $p \in \nset{k}$ such that in every $\vect{a} = (a_1, \dots, a_k) \in \mu^{-1}(1)$, we have $a_p = 1$.
Consequently, for every $\vect{a} \in \beta_m^{-1}(1)$, we have $(\gamma_1, \dots, \gamma_k)(\vect{a}) \in \mu^{-1}(1)$, and hence $\gamma_p(\vect{a}) = 1$.
In other words, $\beta_m \minorant \gamma_p$.
Since $\vect{1} \in \beta_m^{-1}(1)$, we have $\mathbf{g}^p(\vect{1}) =: \vect{w} \in \beta_{n_p}^{-1}(1)$.
Because the complement of each true point of $\beta_{n_p}$ is a false point and $g^p_1, \dots, g^p_{n_p} \in \clLS$, we have $\mathbf{g}^p(\vect{0}) = \overline{\vect{w}} \in \beta_{n_p}^{-1}(0)$.
By Lemma~\ref{lem:meqn}, $m = n_p$, so $m \in S$.
\end{proof}

\begin{proof}[Proof of Theorem~\ref{thm:Ulin}]
It suffices to prove the statement for $(C_1,C_2) = (K_1,K_2)$, where
$(K_1,K_2)$ is one of the pairs
$(\clL, \clLambda)$,
$(\clLo, \clUk{\infty})$,
$(\clLS, \clUk{\infty})$.
For the remaining pairs $(K_1,K_2)$ listed in the statement of the theorem, the result follows from Proposition~\ref{prop:Knid}.
It then follows from Lemma~\ref{lem:clonmon} that for all clones $C_1$ and $C_2$ such that $C_1 \subseteq K_1$ and $C_2 \subseteq K_2$, there are an uncountable infinitude of $(C_1,C_2)$\hyp{}clonoids.

By Lemma~\ref{lem:minS},
there exists a countably infinite set $F$ of functions
with the property that for all subsets $S \subseteq F$ and for all $f \in F$,
we have $f \in \gen[(K_1,K_2)]{S}$ if and only if $f \in S$.
Consequently, for all $S, T \subseteq F$,
we have $\gen[(K_1,K_2)]{S} = \gen[(K_1,K_2)]{T}$ if and only if $S = T$.
Because the power set of $F$ is uncountable, it follows that there are an uncountable infinitude of $(K_1,K_2)$\hyp{}clonoids.
\end{proof}


\section{Examples of $(C_1,C_2)$-clonoids}
\label{sec:some}

By Theorem~\ref{thm:Ulin},
the lattice of $(C_1,C_2)$\hyp{}clonoids
is uncountable
for all pairs of clones $C_1$ and $C_2$ such that $C_1 \subseteq K_1$ and $C_2 \subseteq K_2$ for some
\[
\begin{split}
(K_1,K_2) \in \{
   & (\clL, \clLambda), (\clLo, \clUk{\infty}), (\clLi, \clUk{\infty}), (\clLS, \clUk{\infty}), 
\\ & (\clL, \clV), (\clLo, \clWk{\infty}), (\clLi, \clWk{\infty}), (\clLS, \clWk{\infty})
\}.
\end{split}
\]
It may be unfeasible to explicitly describe all such $(C_1,C_2)$\hyp{}clonoids.
We confine ourselves to a few examples of these $(C_1,C_2)$\hyp{}clonoids.
The examples are of two different types.
The first examples follow from our earlier work using the monotonicity of function class composition.
The second examples are a new, previously unseen type of clonoids that arise in an intriguing yet completely natural way from linear algebra.

\subsection{Clonoids that are implied by monotonicity of function class composition}

By Lemma~\ref{lem:clonmon}, for clones $C'_1$ and $C'_2$ with $C_1 \subseteq C'_1$ and $C_2 \subseteq C'_2$, every $(C'_1,C'_2)$\hyp{}clonoid is also a $(C_1,C_2)$\hyp{}clonoid.
Thus, taking superclones $C'_1$ and $C'_2$ for which the $(C'_1,C'_2)$\hyp{}clonoids are known, we immediately get some $(C_1,C_2)$\hyp{}clonoids.
We are now going to review earlier results that may be applied here in this way.

Many of our earlier results are presented in the following two\hyp{}part form.
First, we provide a description of all $(C_1,C_2)$\hyp{}clonoids for a certain pair $(C_1,C_2)$ of clones.
Second, we give necessary and sufficient conditions for the stability of all those $(C_1,C_2)$\hyp{}clonoids under left and right composition with clones.
From these stability conditions, we can then determine all the $(C'_1,C'_2)$\hyp{}clonoids for all superclones $C'_1 \supseteq C_1$ and $C'_2 \supseteq C_2$.
Finally, going down to subclones $K_1 \subseteq C'_1$ and $K_2 \subseteq C'_2$, every $(C'_1,C'_2)$\hyp{}clonoid is in turn a $(K_1,K_2)$\hyp{}clonoid.
In this way, we can find a few of the uncountably many $(K_1,K_2)$\hyp{}clonoids of Theorem~\ref{thm:Ulin}.

In order to apply this method, observe first that every superclone of a clone $C_1 \in [\clLc,\clL]$ is either a member of this interval or contains one of
the clones $\clSc$, $\clOX$, $\clXI$, $\clS$, and $\clAll$.
Similarly, every superclone of a clone $C_2 \in [\clLambda_c, \clUk{\infty}]$ is either a member of this interval or contains one of
the clones $\clMcUk{k}$, $\clMUk{k}$, $\clTcUk{k}$, and $\clUk{k}$ for an integer $k \geq 2$.
Now we look more carefully at clonoids with such source or target clones.

\begin{example}
The $(\clSc,\clIc)$-, $(\clS,\clIc)$-, $(\clOX,\clIc)$-, $(\clXI,\clIc)$-, and $(\clAll,\clIc)$\hyp{}clonoids are explicitly described in \cite[Theorem~7.2]{Lehtonen-discmono}.
The $(\clSc,\clVc)$\hyp{}clonoids are explicitly described in \cite[Proposition~7.7, Table~7.1, Figure~7.13]{Lehtonen-discmono},
and their stability under left and right compositions with clones is determined in \cite[Theorem~7.8, Table~7.1]{Lehtonen-discmono}.

Specifically, there are exactly 123 $(\clSc,\clVc)$\hyp{}clonoids.
They are the meet\hyp{}irreducible classes
$\clAll$,
$\clEiio \cup \clSmaj$,
$\clEioi \cup \clSmaj$,
$\clEiio$,
$\clEioi$,
$\clOX \cup \clSmaj$,
$\clXO \cup \clSmaj$,
$\clOXCI$,
$\clXOCI$,
$\clOX$,
$\clXO$,
$\clEioo \cup \clReflOO$,
$\clEioo \cup \clVako$,
$\clEioo$,
$\clSmaj \cup \clRefl$,
$\clRefl$
and all their intersections.
Because $\clLc \subseteq \clSc$, they are also $(\clLc,\clVc)$\hyp{}clonoids.
By Proposition~\ref{prop:Knid}, the duals of the above classes are $(\clLc,\clLambdac)$\hyp{}clonoids.

Here,
$\clSmin$ is the set of all minorants of self\hyp{}dual functions,
$\clSmaj$ is the set of all majorants of self\hyp{}dual functions,
and
$\clRefl$ is the set of all reflexive functions.
\end{example}

\begin{example}
The $(\clIc,\clSM)$\hyp{}clonoids are explicitly described in \cite[Theorem~4.1, Figure~3]{Lehtonen-SM}.
Necessary and sufficient conditions for their stability under right and left compositions with clones are determined in \cite[Theorem~5.1, Table~1]{Lehtonen-SM}.
From the stability conditions, we can conclude that
there are exactly 24 $(\clLc,\clMo)$\hyp{}clonoids.
They are the meet\hyp{}irreducible classes
$\clAll$,
$\clEiio$,
$\clEioi$,
$\clOXC$,
$\clIXC$,
$\clXOC$,
$\clXIC$,
$\clOX$,
$\clXO$,
$\clRefl$
and all their intersections.
Because $\clVo \subseteq \clMo$, they are also $(\clLc,\clVo)$\hyp{}clonoids.
The $(\clLc,\clMi)$\hyp{}clonoids are the duals of the $(\clLc,\clMo)$\hyp{}clonoids; because $\clLambdai \subseteq \clMi$, they are also $(\clLc,\clLambdai)$\hyp{}clonoids.
Moreover, there are exactly 11 $(\clLc,\clM)$\hyp{}clonoids; they are those $(\clLc,\clMo)$\hyp{}clonoids that are either empty or contain all constant functions.
Because $\clLambda \subseteq \clM$ and $\clV \subseteq \clM$, they are also $(\clLc,\clLambda)$- and $(\clLc,\clV)$\hyp{}clonoids.
\end{example}

\begin{example}
The $(\clIc,\clMcUk{k})$\hyp{}clonoids, for each integer $k \geq 2$, are described in \cite[Theorem~6.1]{Lehtonen-nu}.
Specifically for $k = 2$, the $(\clIc,\clMcUk{2})$\hyp{}clonoids and their stability under left and right compositions with clones are given in \cite[Table~2]{Lehtonen-nu}.
In \cite[Propositions~7.4, 7.5]{Lehtonen-nu}, necessary and sufficient conditions for stability of $(\clIc,\clMcUk{k})$\hyp{}clonoids under left and right composition with clones are provided (for right stability, the conditions are unfortunately not so explicit).

By \cite[Theorem~6.1]{Lehtonen-nu},
for an integer $k \geq 2$,
the $(\clIc,\clMcUk{k})$\hyp{}clonoids include classes of the form $\clKlik{k}{\Theta}$ for $\Theta \subseteq \clAll$, i.e., the $k$\hyp{}local closure of the so\hyp{}called minorant minion generated by $\Theta$.
(For precise definitions, see \cite[Section~4]{Lehtonen-nu}.)
By \cite[Theorem~7.4]{Lehtonen-nu}, every such class $\clKlik{k}{\Theta}$ is also a $(\clIc,\clUk{\infty})$\hyp{}clonoid.
Furthermore, \cite[Proposition~7.5]{Lehtonen-nu} gives a necessary and sufficient condition for a class $\clKlik{k}{\Theta}$ to be stable under left composition with a clone $C$; the condition is expressed in terms of the class $\Theta C$.
Applying this result for each clone $C \subseteq \clL$, we obtain some $(C,\clUk{\infty})$\hyp{}clonoids of the form $\clKlik{k}{\Theta}$.
\end{example}

\subsection{Linear algebra and clonoids}
\label{subsec:linalg}

Let $V$ be a vector space of dimension $n$ over a field $F$.
Let $S \subseteq V$.
The \emph{codimension} of $S$, denoted $\codim(S)$, is the least integer $d$ such that there exists a subspace $W$ of $V$ such that
\[
S = \bigcup_{s \in S} (s + W)
\]
($S$ is the union of some cosets of $W$) and $\codim(W) = \dim(V/W) = d$.
Note that the codimension of $S$ exists and $\codim(S) \leq n$, because every one\hyp{}element subset of $V$ is a coset of the trivial subspace $\{\vect{0}\}$ of $V$ and $\codim(\{\vect{0}\}) = \dim(V / \{\vect{0}\}) = n$.
Consequently, there exists a vector space $U$ with $\dim(U) = d$, a subset $T \subseteq U$, and a surjective linear transformation $f \colon V \to U$ such that $S = f^{-1}(T)$
(equivalently, there exists an affine subspace $A$ of a vector space $U$ with $\dim(A) = d$, a subset $T \subseteq A$, and an affine transformation $f \colon V \to U$ with range $A$ such that $S = f^{-1}(T)$).
The \emph{intersectional codimension} of $S \subseteq V$, denoted by $\icodim(S)$, is the least integer $d$ such that there exist subsets $S_i \subseteq V$ ($i \in I$) with $\codim(S_i) \leq d$ such that $S = \bigcap_{i \in I} S_i$.
Clearly $\icodim(S) \leq \codim(S)$.

\begin{lemma}
\label{lem:int:icodim}
Let $V$ be a vector space over a field $F$, and let $S_i \subseteq V$ \textup{(}$i \in I$\textup{)} with $\icodim(S_i) \leq \delta$ for all $i \in I$.
Let $S = \bigcap_{i \in I} S_i$.
Then $\icodim(S) \leq \delta$.
\end{lemma}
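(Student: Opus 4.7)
The plan is to prove this directly by unwinding the definition of intersectional codimension and using the associativity of arbitrary intersections. This should be essentially a one-step verification with no genuine obstacle; the only mild care needed is to handle the indexing cleanly.

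First, I would fix $\delta$ and apply the hypothesis $\icodim(S_i) \leq \delta$ to each $i \in I$. By the definition of $\icodim$ given just above the lemma, this means that for each $i \in I$ there exists an index set $J_i$ and sets $T_{i,j} \subseteq V$ for $j \in J_i$ with $\codim(T_{i,j}) \leq \delta$ such that $S_i = \bigcap_{j \in J_i} T_{i,j}$.

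Next I would substitute these representations into the given expression for $S$, obtaining
\[
S = \bigcap_{i \in I} S_i = \bigcap_{i \in I} \bigcap_{j \in J_i} T_{i,j} = \bigcap_{(i,j) \in K} T_{i,j},
\]
where $K = \{\,(i,j) \mid i \in I,\ j \in J_i\,\}$. This exhibits $S$ as an intersection of a family of subsets of $V$, each of codimension at most $\delta$.

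Finally, by the definition of intersectional codimension, this expression directly witnesses $\icodim(S) \leq \delta$, which completes the proof. The only subtlety worth remarking on is the boundary case where some $S_i$ (or $S$ itself) is empty or equal to $V$; here the empty intersection convention gives $V$, and $V$ itself trivially has codimension $0 \leq \delta$, so these degenerate cases cause no issue.
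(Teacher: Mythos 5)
Your proof is correct and is essentially identical to the paper's: both unwind the definition of $\icodim$ to write each $S_i$ as an intersection of sets of codimension at most $\delta$, then merge the index sets to exhibit $S$ itself as such an intersection. The closing remark about degenerate cases is harmless but unnecessary.
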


\begin{proof}
Because $\icodim(S_i) \leq \delta$, there exist subspaces $S_{i,j} \subseteq V$ with $\codim(S_{i,j}) \leq \delta$ ($j \in J_i$) such that $S_i = \bigcap_{j \in J_i} S_{i,j}$.
Consequently,
\[
S = \bigcap_{\substack{i \in I \\ j \in J_i}} S_{i,j},
\]
which implies that $\icodim(S) \leq \delta$.
\end{proof}

\begin{lemma}
\label{lem:aff-pre}
Let $V$ and $V'$ be vector spaces over a field $F$, and let $S \subseteq V$.
Let $f \colon V' \to V$ be an affine transformation.
Then the following statements hold.
\begin{enumerate}[label={\upshape(\roman*)}]
\item\label{lem:aff-pre:codim} $\codim(f^{-1}(S)) \leq \codim(S)$.
\item\label{lem:aff-pre:icodim} $\icodim(f^{-1}(S)) \leq \icodim(S)$.
\end{enumerate}
\end{lemma}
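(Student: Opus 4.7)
The plan is to prove part \ref{lem:aff-pre:icodim} as an immediate consequence of part \ref{lem:aff-pre:codim}, so essentially all the work lies in \ref{lem:aff-pre:codim}.

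For \ref{lem:aff-pre:codim}, let $d = \codim(S)$. By the definition of codimension, there is a subspace $W \subseteq V$ with $\dim(V/W) = d$ such that $S$ is a union of cosets of $W$; equivalently, letting $\pi \colon V \to V/W$ be the canonical surjection, there is a subset $T \subseteq V/W$ with $S = \pi^{-1}(T)$. The composite $g := \pi \circ f \colon V' \to V/W$ is affine, so it can be written as $g(x) = L(x) + c$ for a linear map $L \colon V' \to V/W$ and a constant $c \in V/W$. Setting $K := \ker(L)$, I would observe that $f^{-1}(S) = g^{-1}(T)$ is a (possibly empty) union of cosets of $K$: indeed, if $v_0 \in g^{-1}(T)$ then $g^{-1}(g(v_0)) = v_0 + K$ is entirely contained in $g^{-1}(T)$. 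Finally, $\codim(K) = \dim(V'/K) = \dim(L(V')) \leq \dim(V/W) = d$, which gives $\codim(f^{-1}(S)) \leq d$.

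For \ref{lem:aff-pre:icodim}, let $\delta = \icodim(S)$, so that there exist subsets $S_i \subseteq V$ with $\codim(S_i) \leq \delta$ and $S = \bigcap_{i \in I} S_i$. Pulling back under $f$ commutes with arbitrary intersections, so $f^{-1}(S) = \bigcap_{i \in I} f^{-1}(S_i)$; by part \ref{lem:aff-pre:codim}, each $\codim(f^{-1}(S_i)) \leq \codim(S_i) \leq \delta$, whence $\icodim(f^{-1}(S)) \leq \delta$ by Lemma~\ref{lem:int:icodim} (or directly by unpacking the definition of intersectional codimension).

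The only mild subtlety I expect concerns part \ref{lem:aff-pre:codim} in the affine (rather than linear) setting: one must be careful that although $g$ has an additive shift $c$, what matters for the coset structure of $g^{-1}(T)$ is the kernel of the linear part $L$, not of $g$ itself. Once that observation is made, the rest of the argument is routine dimension counting, and the bound $\dim(L(V')) \leq \dim(V/W)$ is immediate.
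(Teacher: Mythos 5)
Your proof is correct and follows essentially the same route as the paper: compose $f$ with the map witnessing $\codim(S)=d$, bound the relevant dimension by $d$ (you via $\dim(V'/\ker L)=\dim(L(V'))\le\dim(V/W)$, the paper via the affine range $H=(L\circ f)(V')$ having dimension $\le d$), and then deduce (ii) from (i) using that preimages commute with intersections. Your explicit identification of the kernel of the linear part as the subspace whose cosets tile $f^{-1}(S)$ is just a slightly more hands-on rendering of the same argument, and it correctly handles the affine shift.
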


\begin{proof}
\ref{lem:aff-pre:codim}
Assume $\codim(S) = d$.
Then there exists a vector space $U$ of dimension $d$, a subset $T \subseteq U$, and a surjective linear transformation $L \colon V \to U$ such that $S = L^{-1}(T)$.
Now, $L \circ f \colon V' \to U$ is an affine transformation with range $H := (L \circ f)(V')$.
Because $H$ is an affine subspace of $U$ of dimension $d' \leq d$ and $f^{-1}(S) = (L \circ f)^{-1}(T) = (L \circ f)^{-1}(T \cap H)$, we conclude that $\codim(f^{-1}(S)) \leq d' \leq d$.

\ref{lem:aff-pre:icodim}
Assume that $\icodim(S) = \delta$.
Then there exist $S_i \subseteq V$ with $\codim(S_i) \leq \delta$ ($i \in I$) such that $S = \bigcap_{i \in I} S_i$.
Because
\[
f^{-1}(S)
= f^{-1} \Bigl( \bigcap_{i \in I} S_i \Bigr)
= \bigcap_{i \in I} f^{-1}(S_i)
\]
and, by part \ref{lem:aff-pre:codim}, $\codim(f^{-1}(S_i)) \leq \codim(S_i) \leq \delta$ for all $i \in I$,
we conclude that $\icodim(f^{-1}(S)) \leq \delta$.
\end{proof}

\begin{remark}
The analogue of Lemma~\ref{lem:int:icodim} does not hold for the codimension.
For example, let $V$ be a vector space of dimension $n$.
Every subspace of dimension $n-1$ has codimension $1$, but the intersection of all subspaces of dimension $n-1$ is the trivial subspace, which has codimension $n$.
\end{remark}

For $i \in \IN^{+}$, let $V_i$ be a vector space of dimension $i$ over $F$.
A family $\mathcal{K} = (\mathcal{K}_i)_{i \in \IN^{+}}$, where $\mathcal{K}_i \subseteq V_i$ is called an \emph{intersectional affine clonoid} if $\mathcal{K}$ is closed under intersections and preimages under affine transformations, that is, for all $i \in \IN^{+}$, for all $S \subseteq \mathcal{K}_i$, we have $\bigcap S \in \mathcal{K}_i$;
and for all $i, j \in \IN^{+}$, for all $S \in \mathcal{K}_i$, and for all affine transformations $f \colon V_j \to V_i$, we have $f^{-1}(S) \in \mathcal{K}_j$.

\begin{example}
\label{ex:iac}
Examples of intersectional affine clonoids include the following collections of subsets of finite\hyp{}dimensional vector spaces over $F$:
\begin{enumerate}[label=(\roman*)]
\item $(\emptyset)_{i \in \IN^{+}}$ (the empty family).
\item $(\{\emptyset\})_{i \in \IN^{+}}$ (the family of all empty subsets).
\item $(\mathcal{P}(V_i))_{i \in \IN^{+}}$ (all subsets).
\item $(\mathcal{A}_i)_{i \in \IN^{+}}$, where $\mathcal{A}_i$ is the set of all affine subspaces of $V_i$.
\item For $d \in \IN$, $(\mathcal{V}^d_i)_{i \in \IN^{+}}$, where $\mathcal{V}^d_i = \{ \, S \subseteq V_i \mid \icodim(S) \leq d \, \}$ (see Lemmata~\ref{lem:int:icodim} and \ref{lem:aff-pre}).
\end{enumerate}
\end{example}

Let us consider the above specifically for vector spaces $\{0,1\}^n$ over the two\hyp{}element field $\{0,1\}$.
The \emph{characteristic function} of a subset $S \subseteq \{0,1\}^n$ is the Boolean function $\chi_S \colon \{0,1\}^n \to \{0,1\}$ with $\chi_S(\vect{a}) = 1$ if and only if $\vect{a} \in S$.
By definition, $\chi_S^{-1}(1) = S$.
Now, a collection $\mathcal{K}_i$ of subsets of $\{0,1\}^n$ corresponds to the subset $X_{\mathcal{K}_i} = \{ \, \chi_S \mid S \in \mathcal{K}_i \, \}$.
In this way, a family $\mathcal{K} = (\mathcal{K}_i)_{i \in \IN^{+}}$ corresponds to $X_\mathcal{K} := \bigcup_{i \in \IN^{+}} X_{\mathcal{K}_i} \subseteq \clAll$.

\begin{lemma}
\label{lem:iac-LLambda}
Let $\mathcal{K} = (\mathcal{K}_i)_{i \in \IN^{+}}$, where $\mathcal{K}_i$ is a collection of subsets of $\{0,1\}^n$.
\begin{enumerate}[label={(\roman*)}]
\item $\mathcal{K}$ is closed under preimages under affine transformations if and only if $X_\mathcal{K} \clL \subseteq X_\mathcal{K}$.
\item $\mathcal{K}$ is closed under intersections if and only if $\clLambdac X_\mathcal{K} \subseteq X_\mathcal{K}$.
\item $\mathcal{K}$ is closed under intersections and $\{ \emptyset, \{0,1\}^n \} \subseteq \mathcal{K}_i$ for all $i \in \IN^{+}$ if and only if $\clLambda X_\mathcal{K} \subseteq X_\mathcal{K}$.
\end{enumerate}
\end{lemma}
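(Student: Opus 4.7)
The plan is to prove all three equivalences uniformly by translating function-class composition into set-theoretic operations on the $1$-sets of characteristic functions. Two elementary identities do all the work. First, for $\chi_S \in X_\mathcal{K}^{(n)}$ and $g_1, \dots, g_n \in \clL^{(m)}$, the tuple $L := (g_1, \dots, g_n)$ is exactly an affine transformation $\{0,1\}^m \to \{0,1\}^n$, and pointwise evaluation gives $\chi_S(g_1, \dots, g_n) = \chi_{L^{-1}(S)}$; conversely, every affine transformation $\{0,1\}^m \to \{0,1\}^n$ arises this way. Second, since $\clLambdac = \clonegen{\wedge}$, every $f \in \clLambdac^{(n)}$ has the form $\bigwedge_{i \in I} x_i$ for some nonempty $I \subseteq \nset{n}$ (with $\card{I} = 1$ being a projection), and left-composing with $(\chi_{S_1}, \dots, \chi_{S_n})$ yields $\chi_{\bigcap_{i \in I} S_i}$; the clone $\clLambda$ adds only the constants $\vak{0}^{(n)}$ and $\vak{1}^{(n)}$, whose left composition with any $m$-tuple of members of $X_\mathcal{K}^{(m)}$ yields $\chi_\emptyset$ and $\chi_{\{0,1\}^m}$ respectively.

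From this dictionary each equivalence reads off immediately. For (i): the first identity gives $X_\mathcal{K}\clL = \{\, \chi_{L^{-1}(S)} \mid S \in \mathcal{K},\ L \text{ affine} \,\}$, so $X_\mathcal{K}\clL \subseteq X_\mathcal{K}$ says exactly that $L^{-1}(S) \in \mathcal{K}$ whenever $S \in \mathcal{K}$ and $L$ is affine. For (ii): the conjunction identity gives $\clLambdac X_\mathcal{K} = \{\, \chi_{S_1 \cap \cdots \cap S_k} \mid k \geq 1,\ S_1, \dots, S_k \in \mathcal{K} \,\}$, so $\clLambdac X_\mathcal{K} \subseteq X_\mathcal{K}$ holds iff each $\mathcal{K}_m$ is closed under finite nonempty intersections, which—because $\{0,1\}^m$ is finite—is the same as closure under arbitrary nonempty intersections. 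For (iii): the extra constants in $\clLambda$ contribute exactly the characteristic functions $\chi_\emptyset$ and $\chi_{\{0,1\}^m}$, producing the additional condition $\{\emptyset, \{0,1\}^m\} \subseteq \mathcal{K}_m$ on top of closure under intersections.

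No serious obstacle is anticipated; the proof is essentially an unpacking of definitions once the dictionary is set up. The only interpretive point is that ``closed under intersections'' must here be read as closed under \emph{nonempty} intersections, consistent with part (iii) listing $\{\emptyset, \{0,1\}^m\} \subseteq \mathcal{K}_m$ as a separate clause: $\clLambdac$ has no constants and so cannot, on its own, place the top element $\{0,1\}^m$ (the empty intersection) in $\mathcal{K}_m$.
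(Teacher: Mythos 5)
The paper leaves this lemma as an exercise, so there is no written proof to compare against; your argument is correct and is exactly the intended unpacking. The two translation identities you isolate --- $\chi_S(g_1,\dots,g_n)=\chi_{L^{-1}(S)}$ for the affine tuple $L=(g_1,\dots,g_n)$, and $\bigl(\bigwedge_{i\in I}x_i\bigr)(\chi_{S_1},\dots,\chi_{S_n})=\chi_{\bigcap_{i\in I}S_i}$ together with the observation that $\clLambdac=\clonegen{\wedge}$ consists precisely of such nonempty conjunctions of variables --- do all the work, and your reading of ``closed under intersections'' as closure under \emph{nonempty} intersections is the one consistent with the paper's inclusion of the empty family among the intersectional affine clonoids. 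The only point worth flagging is a degenerate case in the ``only if'' direction of (iii): if $\mathcal{K}_m=\emptyset$ for some $m$, there are no $m$\hyp{}ary members of $X_\mathcal{K}$ to substitute into the constants, so $\clLambda X_\mathcal{K}\subseteq X_\mathcal{K}$ does not force $\{\emptyset,\{0,1\}^m\}\subseteq\mathcal{K}_m$ for that $m$; this is an imprecision in the lemma as stated rather than a flaw in your proof.
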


\begin{proof}
Left as an easy exercise to the reader.
\end{proof}

We can now shift the concepts of codimension and intersectional codimension to Boolean functions.
Let $f \in \clAll^{(n)}$.
The \emph{codimension} and the \emph{intersectional codimension} of $f$, denoted $\codim(f)$ and $\icodim(f)$, respectively, are the codimension and the intersectional codimension, respectively, of $f^{-1}(1)$ as a subset of $\{0,1\}^n$.

In view of Example~\ref{ex:iac} and Lemma~\ref{lem:iac-LLambda}, the following sets of Boolean functions are $(\clL,\clLambdac)$\hyp{}clonoids:
\begin{enumerate}[label=(\roman*)]
\item The class $\clAff$ of all Boolean functions $f$ such that $f^{-1}(1)$ is empty or an affine subspace of $\{0,1\}^n$.
\item For $d \in \IN$, the class $\clICD{d}$ of all Boolean functions $f$ with $\icodim(f) \leq d$.
In fact, $\clICD{0} = \clVak$ and $\clICD{1} = \clAff$.
\end{enumerate}

\begin{example}
For $n \in \IN^{+}$, define the function $\mathord{\vee_n} \colon \{0,1\}^n \to \{0,1\}$, $\mathord{\vee_n}(\vect{a}) = 0$ if and only if $\vect{a} = \vect{0}$.
In other words, $\mathord{\vee_n} = \chi_S$ with $S = \{0,1\}^n \setminus \{\vect{0}\}$.
Because $\mathord{\vee}_n^{-1}(1)$ has $2^n - 1$ elements, it is a meet\hyp{}irreducible element of the power set lattice of $\{0,1\}^n$.
Moreover, since $2^n - 1$ is odd, the only possible way to express $\mathord{\vee}_n^{-1}(1)$ as the union of cosets of a subspace $W$ of $\{0,1\}^n$ is to take $W$ as the trivial subspace.
Consequently, $\icodim(\mathord{\vee_n}) = \codim(\mathord{\vee_n}) = n$.

We conclude that $\mathord{\vee_n} \in \clICD{n} \setminus \clICD{n-1}$.
Therefore, $\clICD{d} \subset \clICD{d+1}$ for all $d \in \IN$,
and we have a countably infinite ascending chain of clonoids
$\clICD{0} \subset \clICD{1} \subset \clICD{2} \subset \dots \subset \clICD{d} \subset \dots$.

In fact, every $n$\hyp{}ary Boolean function with only one false point has intersectional codimension $n$.
Denoting by $f_\vect{a}$ the function $\chi_S$ with $S = \{0,1\}^n \setminus \{\vect{a}\}$, we have that $f_\vect{a} = \mathord{\vee_n} \circ \lambda_\vect{a}$, where $\lambda_\vect{a}(\vect{x}) = \vect{x} - \vect{a}$; the function $\lambda_\vect{a}$ is an affine transformation and belongs to $\clL$.

Now, for every $g \in \clAll^{(n)} \setminus \clVaki$, we have that
\[
g
= \bigwedge_{\vect{a} \in g^{-1}(0)} f_\vect{a}
= \bigwedge_{\vect{a} \in g^{-1}(0)} (\mathord{\vee_n} \circ \lambda_\vect{a})
\in \clLambdac ( \{ \mathord{\vee_n} \} \clL )
= \gen[(\clL, \clLambdac)]{\mathord{\vee_n}}.
\]
Moreover, $\vak{1} = \mathord{\vee_n} ( \vak{1}, \dots, \vak{1} ) \in \gen[(\clL, \clLambdac)]{\mathord{\vee_n}}$.
Note also that
\[
\mathord{\vee_{n-1}} = \mathord{\vee_n}(\pr^{(n-1)}_1, \dots, \pr^{(n-1)}_{n-1}, \pr^{(n-1)}_{n-1}) \in \gen[(\clL, \clLambdac)]{\mathord{\vee_n}}.
\]
From these facts we conclude that $\bigcup_{i = 1}^n \clAll^{(i)} \subseteq \gen[(\clL, \clLambdac)]{\mathord{\vee_n}} \subseteq \clICD{n}$.
\end{example}


\section{Cardinalities of clonoid lattices of Boolean functions}
\label{sec:summary}

Putting together Theorem~\ref{thm:Ulin} and our earlier results from \cite{CouLeh-Lcstability,Lehtonen-SM,Lehtonen-nu,Lehtonen-discmono,Lehtonen-ess-lin-sem-sep},
we finally arrive at a complete classification of pairs $(C_1,C_2)$ of clones on $\{0,1\}$ according to the cardinality of the clonoid lattice $\closys{(C_1,C_2)}$.
We summarize our findings in Theorem~\ref{thm:card}, which extends Sparks's Theorem~\ref{thm:Sparks} for Boolean functions.

While Theorem~\ref{thm:card} concerns only the cardinality of $\closys{(C_1,C_2)}$,
we would like to point out that,
in the cases when the lattice $\closys{(C_1,C_2)}$ is finite or countably infinite,
descriptions of the $(C_1,C_2)$\hyp{}clonoids, often together with Hasse diagrams of $\closys{(C_1,C_2)}$, are additionally provided in the papers
\cite{CouLeh-Lcstability,Lehtonen-SM,Lehtonen-nu,Lehtonen-discmono,Lehtonen-ess-lin-sem-sep}.

We are going to need a couple of auxiliary results.
For $c \in \{0,1\}$, let $\clVaka{c}$ be the set of all constant functions in $\clAll$ taking value $c$.
For a subset $S \subseteq \{0,1\}$, let $\clVaka{S} = \bigcup_{c \in S} \clVaka{c}$.

\begin{lemma}[{\cite[Proposition~3.2]{Lehtonen-ess-lin-sem-sep}}]
\label{lem:Cvak}
Let $C_1$ and $C_2$ be clones on $\{0,1\}$, let $S \subseteq \{0,1\}$, and assume that $C_2 \cup \clVaka{S}$ is a clone on $\{0,1\}$.
Then the following statements hold.
\begin{enumerate}[label={\upshape(\roman*)}]
\item $\closys{(C_1,C_2 \cup \clVaka{S})}$ is uncountable if and only if $\closys{(C_1,C_2)}$ is uncountable.
\item $\closys{(C_1,C_2 \cup \clVaka{S})}$ is countably infinite if and only if $\closys{(C_1,C_2)}$ is countably infinite.
\item $\closys{(C_1,C_2 \cup \clVaka{S})}$ is finite if and only if $\closys{(C_1,C_2)}$ is finite.
\end{enumerate}
\end{lemma}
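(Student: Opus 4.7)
The plan is to compare the two lattices via a surjective, finite-to-one map, so that their cardinalities differ at most by a bounded factor and therefore fall into the same class from $\{\text{finite, countably infinite, continuum}\}$.

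One inequality is immediate: since $C_2 \subseteq C_2 \cup \clVaka{S}$, Lemma~\ref{lem:clonmon} yields the set inclusion $\closys{(C_1,C_2\cup\clVaka{S})} \subseteq \closys{(C_1,C_2)}$, hence $\card{\closys{(C_1,C_2\cup\clVaka{S})}} \leq \card{\closys{(C_1,C_2)}}$. For the reverse direction, I would introduce the map
\[
\Phi \colon \closys{(C_1,C_2)} \longrightarrow \closys{(C_1,C_2\cup\clVaka{S})}, \quad K \longmapsto (C_2 \cup \clVaka{S})(K C_1),
\]
sending $K$ to the $(C_1,C_2 \cup \clVaka{S})$-clonoid it generates. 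This map is surjective, since for every $K' \in \closys{(C_1,C_2\cup\clVaka{S})}$ one has $\Phi(K') = K'$.

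The structural claim driving the proof is that $K$ and $\Phi(K)$ agree outside of the set $\clVak$ of constant functions. Indeed, writing $\Phi(K) = (C_2 \cup \clVaka{S}) K$ (using $K C_1 \subseteq K$), the only elements of $\Phi(K)$ not already in $C_2 K \subseteq K$ arise from compositions $\vak{c}(f_1,\dots,f_n)$ with $c \in S$ and $f_i \in K$, which yield constant functions, together with further compositions of the enlarged clone on these new constants; by the standing hypothesis that $C_2 \cup \clVaka{S}$ is a clone, all such products remain in the constant layer $\clVak$. Consequently, a fiber $\Phi^{-1}(K')$ consists of $(C_1,C_2)$-clonoids $K$ with $K \setminus \clVak$ fixed, so that $K$ is determined by $K \cap \clVak$.

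Because $K$ is stable under right composition with $C_1 \supseteq \clProj{\{0,1\}}$, applying the projection $\pr_1^{(m)}$ to $\vak{c}^{(n)} \in K$ yields $\vak{c}^{(m)} \in K$ for every arity $m$, so $K \cap \clVaka{c}$ is either empty or the whole of $\clVaka{c}$ for each $c \in \{0,1\}$. Thus each fiber has at most $2^{2} = 4$ elements, giving $\card{\closys{(C_1,C_2)}} \leq 4 \cdot \card{\closys{(C_1,C_2\cup\clVaka{S})}}$, which together with the opposite inequality establishes all three parts of the lemma. The delicate point I anticipate is the structural claim that $\Phi$ modifies only the constant-function part of $K$: one must track every composition produced by enlarging the target clone with $\clVaka{S}$ and use that $C_2 \cup \clVaka{S}$ is a clone to see that no new non-constant functions appear.
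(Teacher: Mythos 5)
Your argument is correct. The paper does not prove this lemma itself --- it imports it as \cite[Proposition~3.2]{Lehtonen-ess-lin-sem-sep} --- so there is no in-text proof to compare against, but your reasoning is sound and is the natural argument: the inclusion $\closys{(C_1,C_2\cup\clVaka{S})}\subseteq\closys{(C_1,C_2)}$ from Lemma~\ref{lem:clonmon}, plus the observation that $\Phi(K)=(C_2\cup\clVaka{S})(KC_1)=K\cup\clVaka{S}$ for nonempty $K$ (so fibers of the surjection $\Phi$ have at most $4$ elements), pins both lattices into the same cardinality class. The two points that need the hypotheses are exactly where you use them: the identity $\gen[(C_1,C_2\cup\clVaka{S})]{K}=(C_2\cup\clVaka{S})(KC_1)$ requires $C_2\cup\clVaka{S}$ to be a clone, and the fact that $K\cap\clVaka{c}$ is all-or-nothing uses stability of $K$ under right composition with projections.
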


\begin{lemma}[{\cite[Proposition~3.11]{Lehtonen-ess-lin-sem-sep}}]
\label{lem:IcIstar}
Let $C_1$ be a subclone of $\clOX$ or of $\clXI$.
\begin{enumerate}[label={\upshape(\roman*)}]
\item $\closys{(C_1,\clIc)}$ is uncountable if and only if $\closys{(C_1,\clIstar)}$ is uncountable.
\item $\closys{(C_1,\clIc)}$ is countably infinite if and only if $\closys{(C_1,\clIstar)}$ is countably infinite.
\item $\closys{(C_1,\clIc)}$ is finite if and only if $\closys{(C_1,\clIstar)}$ is finite.
\end{enumerate}
\end{lemma}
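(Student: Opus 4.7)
My approach exploits the fact that $\clIc$ contributes nothing on the left (it consists only of projections), while $\clIstar$ adds precisely the negations. Thus a $(C_1,\clIc)$\hyp{}clonoid is simply a subset of $\clAll$ closed under right composition with $C_1$, and a $(C_1,\clIstar)$\hyp{}clonoid is such a set that is additionally closed under outer negation $f \mapsto \overline{f}$. The plan is to reduce both lattices to a common combinatorial object and compare cardinalities.

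I would treat the case $C_1 \subseteq \clOX$, noting that $C_1 \subseteq \clXI$ is entirely dual. The key observation is that since every $g \in C_1$ satisfies $g(\vect{0}) = 0$, right composition with $C_1$ preserves the value at $\vect{0}$: for $f \in \clAll^{(n)}$ and $g_1,\dots,g_n \in C_1$, one has $f(g_1,\dots,g_n)(\vect{0}) = f(\vect{0})$. Consequently, both $\clOX$ and $\clIX$ are stable under right composition with $C_1$, and every $(C_1,\clIc)$\hyp{}clonoid $K$ decomposes as a disjoint union $K = K_0 \sqcup K_1$ with $K_0 = K \cap \clOX$ and $K_1 = K \cap \clIX$, each of which is itself a $(C_1,\clIc)$\hyp{}clonoid. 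Writing $\mathcal{M}_0$ and $\mathcal{M}_1$ for the collections of $(C_1,\clIc)$\hyp{}clonoids contained in $\clOX$ and in $\clIX$ respectively, and noting that the two parts do not interact under right composition, this decomposition yields a bijection $\closys{(C_1,\clIc)} \cong \mathcal{M}_0 \times \mathcal{M}_1$.

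Next, complementation $M \mapsto \overline{M}$ restricts to a bijection $\mathcal{M}_0 \to \mathcal{M}_1$ (it swaps $\clOX$ and $\clIX$, and commutes with right composition by a direct computation as in the proof of Proposition~\ref{prop:Knid}), so $|\mathcal{M}_0| = |\mathcal{M}_1|$. Moreover, for a $(C_1,\clIc)$\hyp{}clonoid $K = M_0 \cup M_1$ with $M_i \in \mathcal{M}_i$, being additionally a $(C_1,\clIstar)$\hyp{}clonoid amounts to $\overline{K} = K$, which in the decomposition is exactly $M_1 = \overline{M_0}$. Hence $(C_1,\clIstar)$\hyp{}clonoids are in bijection with $\mathcal{M}_0$ via $M_0 \leftrightarrow M_0 \cup \overline{M_0}$.

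Combining the bijections, $|\closys{(C_1,\clIc)}| = |\mathcal{M}_0|^2$ while $|\closys{(C_1,\clIstar)}| = |\mathcal{M}_0|$. Since a cardinal $\kappa$ is finite, countably infinite, or uncountable exactly when $\kappa^2$ is, all three equivalences in the lemma follow simultaneously. The main obstacle is really only the set-up of the product decomposition $\closys{(C_1,\clIc)} \cong \mathcal{M}_0 \times \mathcal{M}_1$: one must verify that right composition by $C_1$ cannot mix the $\clOX$ and $\clIX$ parts, and this is exactly what the hypothesis $C_1 \subseteq \clOX$ (or $C_1 \subseteq \clXI$) purchases; without that hypothesis the decomposition fails and the reduction collapses.
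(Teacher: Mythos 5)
Your argument is correct. Note that the paper itself does not prove this lemma --- it imports it verbatim from \cite[Proposition~3.11]{Lehtonen-ess-lin-sem-sep} --- so there is no in-paper proof to compare against; your proposal is a legitimate self-contained substitute. The key steps all check out: since $\clIc K \subseteq K$ holds for every $K$, a $(C_1,\clIc)$\hyp{}clonoid is exactly a set stable under right composition with $C_1$; since every $g \in C_1 \subseteq \clOX$ fixes the value at $\vect{0}$, the partition $\clAll = \clOX \sqcup \clIX$ is respected by $K \mapsto KC_1$, and because $(M_0 \cup M_1)C_1 = M_0C_1 \cup M_1C_1$ the map $K \mapsto (K \cap \clOX, K \cap \clIX)$ really is a bijection onto $\mathcal{M}_0 \times \mathcal{M}_1$; the computation $\clIstar K = K \cup \overline{K}$ shows that the extra condition imposed by $\clIstar$ is precisely $\overline{K} = K$, i.e.\ $M_1 = \overline{M_0}$, giving $\lvert\closys{(C_1,\clIstar)}\rvert = \lvert\mathcal{M}_0\rvert$ against $\lvert\closys{(C_1,\clIc)}\rvert = \lvert\mathcal{M}_0\rvert^2$; and a cardinal and its square indeed lie in the same class (finite, $\aleph_0$, uncountable). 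Two small points you should make explicit in a final write-up: the case $C_1 \subseteq \clXI$ reduces to the case $C_1 \subseteq \clOX$ via Proposition~\ref{prop:Knid}\ref{prop:Knid:Kn} (using $\clXI^\mathrm{d} = \clOX$ and $\clIc^\mathrm{d} = \clIc$, $\clIstar^\mathrm{d} = \clIstar$), rather than by an unexplained appeal to duality; and the fact that outer complementation carries $\mathcal{M}_0$ bijectively onto $\mathcal{M}_1$ is exactly Proposition~\ref{prop:Knid}\ref{prop:Knid:Ki} with $C_2 = \clIc$, so you can cite it instead of redoing the computation.
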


\begin{theorem}
\label{thm:card}
Let $C_1$ and $C_2$ be clones on $\{0,1\}$.
Then the lattice $\closys{(C_1,C_2)}$ of $(C_1,C_2)$\hyp{}clonoids is
\textup{(}see Table~\ref{table:card}\textup{)}
\begin{enumerate}[label={\upshape(\roman*)}]
\item\label{thm:card:F}
finite if $C_1 \supseteq K_1$ and $C_2 \supseteq K_2$ for some
\[
\begin{split}
(K_1, K_2) \in
\{ \, & (\clIc, \clMcUk{k}) \mid 2 \leq k < \infty \, \}
\cup
\{ \, (\clIc, \clMcWk{k}) \mid 2 \leq k < \infty \, \}
\cup {}
\\ 
\{
&
(\clIc, \clSM),
(\clI, \clMcUk{\infty}),
(\clI, \clMcWk{\infty}),
(\clLambdac, \clLc),
(\clVc, \clLc),
\\ &
(\clVo, \clMcUk{\infty}),
(\clVo, \clMcWk{\infty}),
(\clLambdai, \clMcUk{\infty}),
(\clLambdai, \clMcWk{\infty}),
\\ &
(\clMc, \clLambdac),
(\clMc, \clVc),
(\clSc, \clIc)
\};
\end{split}
\]
\item\label{thm:card:C}
countably infinite if
$C_1 \in [\clIc, \clL]$ and $C_2 \in [\clLc, \clL]$
or
$C_1 \in [\clMc, \clM]$ and $C_2 \in [\clIc, \clOmegaOne]$;
\item\label{thm:card:U}
uncountable otherwise.
\end{enumerate}
\end{theorem}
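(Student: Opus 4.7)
The proof is essentially a compilation that combines the newly established Theorem~\ref{thm:Ulin} with all the cardinality results obtained in the series \cite{CouLeh-Lcstability,Lehtonen-SM,Lehtonen-nu,Lehtonen-discmono,Lehtonen-ess-lin-sem-sep}. The plan is to perform a case analysis over Post's lattice using four ingredients: (a)~Lemma~\ref{lem:clonmon}, which shows that $\closys{(C_1,C_2)} \subseteq \closys{(K_1,K_2)}$ whenever $K_1 \subseteq C_1$ and $K_2 \subseteq C_2$, so finiteness and countability propagate upward while uncountability propagates downward; (b)~Proposition~\ref{prop:Knid}, whose four-fold symmetry under inner negation, outer negation, and dualization effectively halves the pairs to examine; (c)~Lemma~\ref{lem:Cvak}, which equates the cardinalities of $\closys{(C_1,C_2)}$ and $\closys{(C_1,C_2 \cup \clVaka{S})}$ whenever the latter is a clone, so one can reduce to clones modulo constants; (d)~Lemma~\ref{lem:IcIstar}, which bridges $\clIc$ and $\clIstar$ when the source is a subclone of $\clOX$ or $\clXI$.

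For part~\ref{thm:card:F}, I would verify one by one that each listed minimal pair $(K_1,K_2)$ yields a finite lattice by citing the paper in which that fact is proved: the pairs $(\clIc, \clMcUk{k})$ and $(\clIc, \clMcWk{k})$ for $2 \leq k \leq \infty$ and those containing $\clMcUk{\infty}$ or $\clMcWk{\infty}$ come from \cite{Lehtonen-nu,Lehtonen-ess-lin-sem-sep}; the pair $(\clIc, \clSM)$ from \cite{Lehtonen-SM}; and the pairs $(\clLambdac, \clLc)$, $(\clVc, \clLc)$, $(\clMc, \clLambdac)$, $(\clMc, \clVc)$, $(\clSc, \clIc)$ from \cite{CouLeh-Lcstability,Lehtonen-discmono,Lehtonen-ess-lin-sem-sep}. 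Lemma~\ref{lem:clonmon} then spreads finiteness to every superclone pair. For part~\ref{thm:card:C}, the interval $[\clIc,\clL] \times [\clLc,\clL]$ is handled by the countability results in \cite{CouLeh-Lcstability}, and the interval $[\clMc,\clM] \times [\clIc,\clOmegaOne]$ by those in \cite{Lehtonen-discmono}; in both cases one needs both the upper bound (countable) and the lower bound (infinite), and monotonicity sandwiches all pairs in the interval.

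For part~\ref{thm:card:U}, the task is to enumerate the pairs $(C_1,C_2)$ falling outside (i) and (ii) and to exhibit uncountability for each. Most such pairs were already treated in the earlier papers; the remaining gap corresponds exactly to $C_1 \in [\clLc,\clL]$ with $C_2 \in [\clLambdac, \clUk{\infty}]$ or in the dual interval $[\clVc, \clWk{\infty}]$, and this is precisely what Theorem~\ref{thm:Ulin} fills, together with Proposition~\ref{prop:Knid} and Lemma~\ref{lem:clonmon} to propagate uncountability to the entire interval. Table~\ref{table:card} organizes the distribution of cases. The main obstacle is not conceptual but combinatorial: one must exhaustively check that parts (i)--(iii) genuinely partition every pair in the countably infinite Post's lattice, which requires tracing through every interval, matching each pair to the correct citation or reduction, and being especially careful at the boundary cases (e.g.\ around $\clMcUk{\infty}$ where finite meets uncountable, and around $\clL$ and $\clOmegaOne$ where countable meets uncountable), using Lemmas~\ref{lem:Cvak} and \ref{lem:IcIstar} to bridge the minor variations in the presence of constants.
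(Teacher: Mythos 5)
Your proposal follows essentially the same route as the paper: the proof is a table\hyp{}driven compilation that justifies each entry of Table~\ref{table:card} by citing the relevant earlier result (Sparks's Theorem~\ref{thm:Sparks} for the $\clSM$, $\clMcUk{k}$, $\clMcWk{k}$ targets with $k$ finite; \cite{CouLeh-Lcstability,Lehtonen-SM,Lehtonen-nu,Lehtonen-discmono,Lehtonen-ess-lin-sem-sep} for the remaining finite, countable, and previously known uncountable cases; and Theorem~\ref{thm:Ulin} for the newly settled linear\hyp{}source cases), with the results propagated across Post's lattice by Lemma~\ref{lem:clonmon}, Proposition~\ref{prop:Knid}, and Lemmas~\ref{lem:Cvak} and~\ref{lem:IcIstar}, exactly as you describe. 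One slip to correct: your range ``$2 \leq k \leq \infty$'' for the pairs $(\clIc, \clMcUk{k})$ must exclude $k = \infty$, since $\clMcUk{\infty}$ contains no near\hyp{}unanimity operation and $\closys{(\clIc, \clMcUk{\infty})}$ is in fact uncountable by Sparks's theorem --- which is precisely why the statement promotes the source clone to $\clI$, $\clVo$, or $\clLambdai$ for the rank\hyp{}$\infty$ targets.
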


\begin{table}
\begingroup\small
\begin{tabular}{r*{8}{l}}
     & & & & & $2 \leq \ell < \infty$ \\
     & & $[\clVc,\clV]$ & $[\clMcUk{\infty}, \clUk{\infty}]$ & & $[\{\clSM, \clMcUk{\ell},$ \\
     & $[\clIc,\clOmegaOne]$ & $[\clLambdac,\clLambda]$ & $[\clMcWk{\infty}, \clWk{\infty}]$ & $[\clLc,\clL]$ & $\phantom{[\{}\clMcWk{\ell} \},\clAll]$ \\
\midrule
$[\clIc, \{\clLo, \clLi, \clLS\}]$
& U\textsuperscript{8} & U\textsuperscript{9} & U\textsuperscript{9} & C\textsuperscript{6} & F\textsuperscript{1} \\
$[\clI, \clL]$
& U\textsuperscript{8} & U\textsuperscript{9} & F\textsuperscript{5} & C\textsuperscript{6} & F\textsuperscript{1} \\
$[\{\clLambdac,  \clVc, \clSM\}, \{\clUk{2}, \clWk{2}\}]$
& U\textsuperscript{12} & U\textsuperscript{11} & U\textsuperscript{10} & F\textsuperscript{3} & F\textsuperscript{1} \\
$[\clLambdai, \clLambda]$, $[\clVo, \clV]$
& U\textsuperscript{8} & U\textsuperscript{8} & F\textsuperscript{5} & F\textsuperscript{3} & F\textsuperscript{1} \\
$[\clMc, \clM]$
& C\textsuperscript{7} & F\textsuperscript{4} & F\textsuperscript{4} & F\textsuperscript{3} & F\textsuperscript{1} \\
$[\clSc, \clAll]$
& F\textsuperscript{2} & F\textsuperscript{2} & F\textsuperscript{2} & F\textsuperscript{2} & F\textsuperscript{1} \\
\end{tabular}
\endgroup

\medskip
Glossary:
F -- finite;
C -- countably infinite;
U -- uncountable

\medskip
\caption{Cardinalities of $(C_1,C_2)$\hyp{}clonoid lattices of Boolean functions. The numbers in superscript refer to the justifications given in the proof of Theorem~\ref{thm:card}.}
\label{table:card}
\end{table}

\begin{proof}
We are going to justify the entries in Table~\ref{table:card} with relevant results from earlier papers and Lemma~\ref{lem:clonmon}.
Reference to table entries is made with the numbers in superscript.

For $C_1 = \clIc$ and $C_2 \in \{\clSM\} \cup \{ \, \clMcUk{k} \mid 2 \leq k < \infty \, \} \cup \{ \, \clMcWk{k} \mid 2 \leq k < \infty \, \}$, $\closys{(C_1,C_2)}$ is finite by Sparks's Theorem~\ref{thm:Sparks}, \cite[Theorem~1.3]{Sparks-2019}.
Together with Lemma~\ref{lem:clonmon}, this gives us the entries marked with ``1''.

For $(C_1, C_2) = (\clSc, \clIc)$, $\closys{(C_1,C_2)}$ is finite by \cite[Theorem~7.2]{Lehtonen-discmono}.
Together with Lemma~\ref{lem:clonmon}, this gives us the entries marked with ``2''.

For $C_1 \in \{\clLambdac, \clVc, \clSM\}$ and $C_2 = \clLc$, $\closys{(C_1,C_2)}$ is finite by \cite[Theorem~7.1]{CouLeh-Lcstability}.
Together with Lemma~\ref{lem:clonmon}, this gives us the entries marked with ``3''.

For $C_1 = \clMc$ and $C_2 \in \{\clVc, \clLambdac\}$, $\closys{(C_1,C_2)}$ is finite by \cite[Propositions~6.12, 6.13]{Lehtonen-discmono}.
Together with Lemma~\ref{lem:clonmon}, this gives us the entries marked with ``4''.

For $C_1 \in \{\clI, \clLambdai, \clVo\}$ and $C_2 \in \{ \clMcUk{\infty}, \clMcWk{\infty} \}$, $\closys{(C_1,C_2)}$ is finite by \cite[Theorem~6.1]{Lehtonen-ess-lin-sem-sep}.
Together with Lemma~\ref{lem:clonmon}, this gives us the entries marked with ``5''.

For $C_1 \in [\clIc, \clL]$ and $C_2 \in [\clLc, \clL]$,
$\closys{(C_1,C_2)}$ is countably infinite by \cite[Theorems~6.1, 7.1]{CouLeh-Lcstability}.
This gives us the entries marked with ``6''.

For $C_1 \in [\clMc, \clM]$ and $C_2 \in [\clIc, \clOmegaOne]$,
$\closys{(C_1,C_2)}$ is countably infinite by \cite[Theorems~6.6, 6.7]{Lehtonen-discmono}.
This gives us the entry marked with ``7''.

For
\[
(C_1,C_2) \in 
\{
(\clL,\clOmegaOne),
(\clLambda,\clLambda),
(\clLambda,\clV),
(\clV,\clLambda),
(\clV,\clV),
(\clLambda,\clOmegaOne),
(\clV,\clOmegaOne)
\},
\]
$\closys{(C_1,C_2)}$ is uncountable
by \cite[Theorem~4.1]{Lehtonen-ess-lin-sem-sep}.
Together with Lemma~\ref{lem:clonmon}, this gives us the entries marked with ``8''.

For $C_1 \in \{\clLo, \clLi, \clLS\}$ and $C_2 \in \{\clUk{\infty}, \clWk{\infty}\}$,
and for $C_1 = \clL$ and $C_2 \in \{\clLambda, \clV\}$,
$\closys{(C_1,C_2)}$ is uncountable by Theorem~\ref{thm:Ulin}.
Together with Lemma~\ref{lem:clonmon}, this gives us the entries marked with ``9''.

For $C_1 \in \{\clUk{2}, \clWk{2}\}$ and $C_2 \in \{\clUk{\infty}, \clWk{\infty}\}$,
$\closys{(C_1,C_2)}$ is uncountable by \cite[Theorem~5.5]{Lehtonen-ess-lin-sem-sep}.
This gives us the entry marked with ``10''.
By Lemma~\ref{lem:clonmon}, $\closys{(C_1,C_2)}$ is uncountable for $C_1 \in \{\clUk{2}, \clWk{2}\}$ and $C_2 \in \{\clLambdao, \clVi\}$,
and then by Lemma~\ref{lem:Cvak} we have also that
$\closys{(C_1,C_2)}$ is uncountable for $C_1 \in \{\clUk{2}, \clWk{2}\}$ and $C_2 \in \{\clLambda, \clV\}$.
This gives us the entry marked with ``11''.
Similarly, by Lemma~\ref{lem:clonmon}, $\closys{(C_1,C_2)}$ is uncountable for $C_1 \in \{\clUk{2}, \clWk{2}\}$ and $C_2 = \clIc$,
and then by Lemma~\ref{lem:IcIstar} we have also that
$\closys{(C_1,C_2)}$ is uncountable for $C_1 \in \{\clUk{2}, \clWk{2}\}$ and $C_2 = \clIstar$,
and by Lemma~\ref{lem:Cvak}
$\closys{(C_1,C_2)}$ is uncountable for $C_1 \in \{\clUk{2}, \clWk{2}\}$ and $C_2 = \clOmegaOne$.
This gives us the entry marked with ``12''.
\end{proof}


\section{Open problems and final remarks}
\label{sec:remarks}

Although this paper brings to a conclusion our work on the cardinalities of lattices of $(C_1,C_2)$\hyp{}clonoids of Boolean functions,
this is obviously not the end of the story but rather a beginning.
The work reported here concerns $(C_1,C_2)$\hyp{}clonoids of Boolean functions.
This clearly calls for a generalization to clonoids on arbitrary (finite) sets.

It is known that $(C_1,C_2)$\hyp{}clonoids can be defined by relation pairs of a prescribed forms, as described by the following result.
A possible direction for future research is to find, if possible, simple relational descriptions for $(C_1,C_2)$\hyp{}clonoids (of Boolean functions); at the moment, such descriptions are known only for a few clonoids.

\begin{theorem}[{Couceiro, Foldes~\cite[Theorem~2]{CouFol-2009}}]
Let $C_1$ and $C_2$ be clones on sets $A$ and $B$, respectively, and let $F \subseteq \mathcal{F}_{AB}$.
The following conditions are equivalent.
\begin{enumerate}[label={\upshape(\roman*)}]
\item $F$ is a locally closed $(C_1,C_2)$\hyp{}clonoid.
\item $F$ is definable by some set of relation pairs $(R,S)$, where $R$ and $S$ are invariants of the clones $C_1$ and $C_2$, respectively.
\end{enumerate}
\end{theorem}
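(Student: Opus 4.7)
The plan is a Galois-style argument relating $(C_1,C_2)$-clonoids to sets of relation pairs, following the classical Pol-Inv template. The implication (ii) $\Rightarrow$ (i) is routine: if $F$ is the set of joint polymorphisms of a family of pairs $(R,S)$ with $R \in \text{Inv}(C_1)$ and $S \in \text{Inv}(C_2)$, then local closure is immediate (membership is tested on finitely many tuples from each $R$ at a time) and the clonoid conditions follow by unwinding the definitions. For $F C_1 \subseteq F$, one uses that each $g_i \in C_1$ preserves $R$ to keep its componentwise image in $R$, after which $f \in F$ sends the result into $S$; dually $C_2 F \subseteq F$ from $S \in \text{Inv}(C_2)$.

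For (i) $\Rightarrow$ (ii), I would show that for every $g \in \mathcal{F}_{AB} \setminus F$ there is a single relation pair $(R_g, S_g)$ with the required invariance properties that separates $g$ from $F$; the family of all such pairs then defines $F$. Fix $g \in \mathcal{F}_{AB}^{(n)} \setminus F$. By local closure, choose a finite $T = \{\vect{u}_1, \dots, \vect{u}_k\} \subseteq A^n$ on which no $h \in F^{(n)}$ agrees with $g$. Form the ``column'' tuples $\vect{q}_i \in A^k$ whose $j$-th entry is the $i$-th coordinate of $\vect{u}_j$, so that componentwise evaluation of any $n$-ary $h$ on $(\vect{q}_1, \dots, \vect{q}_n)$ yields the value vector $(h(\vect{u}_1), \dots, h(\vect{u}_k))$. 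Then define
\[
R := \gen[C_1]{\{\vect{q}_1, \dots, \vect{q}_n\}}, \qquad S := \gen[C_2]{\{(h(\vect{u}_1), \dots, h(\vect{u}_k)) : h \in F^{(n)}\}}.
\]
By construction, $R \in \text{Inv}(C_1)$ and $S \in \text{Inv}(C_2)$.

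The key technical point is to observe that $S$ actually collapses to its generating set itself, $S = \{(h(\vect{u}_1), \dots, h(\vect{u}_k)) : h \in F^{(n)}\}$. Every element of $R$ has the form $c(\vect{q}_1, \dots, \vect{q}_n)$ with $c \in C_1^{(n)}$, so applying $f \in F^{(m)}$ componentwise to $m$ such tuples equals the componentwise evaluation of the composite $f(c_1, \dots, c_m) \in F C_1 \subseteq F$ on $(\vect{q}_1, \dots, \vect{q}_n)$; an analogous absorption takes place when one closes under $C_2$, using $C_2 F \subseteq F$. From this closed form it is immediate that every $f \in F$ preserves $(R, S)$, while $g(\vect{q}_1, \dots, \vect{q}_n) = (g(\vect{u}_1), \dots, g(\vect{u}_k)) \in S$ would force some $h \in F^{(n)}$ to agree with $g$ on $T$, contradicting the choice of $T$.

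The main obstacle is precisely this collapse: the $C_2$-invariant closure of a set of value vectors naively admits arbitrary nested compositions, and reducing them back to single members of $F^{(n)}$ relies on $F$ being simultaneously closed under right composition with $C_1$ and left composition with $C_2$, i.e., on $C_2(FC_1) = F$. Once the closed form of $S$ is in hand, the local-closure hypothesis delivers the separation of $g$ from $F$ essentially for free.
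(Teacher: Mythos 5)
The paper does not prove this statement at all: it is quoted verbatim from Couceiro and Foldes \cite[Theorem~2]{CouFol-2009} as background, so there is no in-paper proof to compare against. Your argument is the standard Pol--Inv separation proof for this Galois connection and it is correct: the collapse $S=\{(h(\vect{u}_1),\dots,h(\vect{u}_k)) : h\in F^{(n)}\}$ does hold, since applying $d\in C_2^{(m)}$ componentwise to value vectors of $h_1,\dots,h_m\in F^{(n)}$ yields the value vector of $d(h_1,\dots,h_m)\in C_2F\subseteq F$, and likewise $FC_1\subseteq F$ absorbs the $C_1$-closure of $R$; this matches the approach of the cited source.
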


In Subsection~\ref{subsec:linalg}, we saw a connection between $(\clL,\clLambdac)$\hyp{}clonoids and
collections of subsets of vector spaces that are closed under intersections (left stability with $\clLambdac$) and preimages under affine transformations (right stability with $\clL$).
We believe the latter concept might be of interest in its own right and is potentially worth further investigation -- after all, the definition is expressed in a very simple and natural way using most basic concepts of linear algebra.
The present author is not aware of any research on this subject.


\section*{Acknowledgments}

The author would like to thank Sebastian Kreinecker for inspiring discussions.


\end{document}